\newtheorem{iThm}{Theorem}
\newtheorem{thm}{Theorem}[section]
\newtheorem{lem}[thm]{Lemma}
\newtheorem{fact}[thm]{Fact}
\newtheorem{defn}[thm]{Definition}
\theoremstyle{remark}
\newtheorem{rem}[thm]{Remark}
\newtheorem{ex}[thm]{Example}
\newtheorem{q}[thm]{Question}
\newcommand{\vertiii}[1]{{\left\vert\kern-0.25ex\left\vert\kern-0.25ex\left\vert #1 
    \right\vert\kern-0.25ex\right\vert\kern-0.25ex\right\vert}}
\newcommand{\norm}[1]{\left\Vert#1\right\Vert}
\newcommand{\abs}[1]{\left\vert#1\right\vert}
\newcommand{\op}[1]{\operatorname{#1}}
\newcommand{\ignore}[1]{}
\renewcommand{\phi}{\varphi}
\newcommand{\into}{\hookrightarrow}
\newcommand{\mult}{\operatorname{mult}}
\begin{document}
\title{Concrete barriers to quantifier elimination in finite dimensional C*-algebras}%
\author[C. J. Eagle]{Christopher J. Eagle} 
\address[C. J. Eagle]{University of Victoria, Department of Mathematics and Statistics, PO BOX 1700 STN CSC, Victoria, British Columbia, Canada, V8W 2Y2}%
\email{eaglec@uvic.ca}
\urladdr{http://www.math.uvic.ca/~eaglec}

\author[T. Schmid]{Todd Schmid}
\address[T. Schmid]{Department of Mathematics, University of Toronto, Toronto, Ontario, Canada,  M5S 2E4}%
\email{tschmid@math.toronto.edu}
\urladdr{https://www.math.toronto.edu/tschmid/}

%

\date{\today}%
\begin{abstract}
Work of Eagle, Farah, Goldbring, Kirchberg, and Vignati shows that the only separable C*-algebras that admit quantifier elimination in continuous logic are $\mathbb{C}$, $\mathbb{C}^2$, $M_2(\mathbb{C})$, and the continuous functions on the Cantor set.  We show that, among finite dimensional C*-algebras, quantifier elimination does hold if the language is expanded to include two new predicate symbols: One for minimal projections, and one for pairs of unitarily conjugate projections.  Both of these predicates are definable, but not quantifier-free definable, in the usual language of C*-algebras.  We also show that adding just the predicate for minimal projections is sufficient in the case of full matrix algebras, but that in general both new predicate symbols are required.
\end{abstract}
\maketitle
\section*{Introduction}\label{sec:Introduction}
In recent years there has been a significant interaction between model theory and operator algebras.  A key step in applying model-theoretic methods to a particular structure is to understand the \emph{definable} objects in that structure, that is, the objects that can be described by a formula in the formal language being used.  One measure of the complexity of a definition is the number of alternations of quantifiers needed; on this measure, the simplest definitions are the ones that require no quantifiers at all.  A theory is said to have \emph{quantifier elimination} if, in structures satisfying that theory, every definable object can be defined without quantifiers (a precise definition is given in Definition \ref{defn:QE}, below).

In the context of C*-algebras, quantifier elimination is a rare phenomenon.  The results of Farah, Kirchberg, Vignati, and the first author in \cite{Eagle2017}, and of Goldbring, Vignati, and the first author in \cite{Eagle2016}, show that the only separable C*-algebras that admit quantifier elimination in the natural language of C*-algebras are $\mathbb{C}$, $\mathbb{C}^2$, $M_2(\mathbb{C})$,  and $C($Cantor space$)$.  Thus in all other separable C*-algebras there are definable objects that cannot be defined without quantifiers.  The methods of proof used in \cite{Eagle2016} and \cite{Eagle2017} do not produce specific formulas whose quantifiers cannot be eliminated.  

Having a concrete description of which objects require quantifiers in their definitions can be very useful.  For instance, Tarski \cite{Tarski1951} showed that the theory of the real numbers does not eliminate quantifiers in the language of fields, but it does in the language of \emph{ordered} fields.  Since the ordering is definable in the language of fields, Tarski's result shows both that the ordering is not quantifier-free definable, and also that the ordering is, in a sense, the \emph{only} obstacle to quantifier elimination for the real field.  Tarski's theorem thus allows for a deep understanding of the definable objects in the real field; for instance, once quantifier elimination for the ordered field of real numbers is known, it is not difficult to show that this structure is $o$-minimal.  The fact that the ordered real field has quantifier elimination, and that the elimination of quantifiers can be carried out algorithmically, has also seen practical applications in computer algebra (using algorithms from \cite{Collins1975}).

In this paper we work in the context of finite dimensional C*-algebras.  We exhibit concrete definable objects which are the only barriers to quantifier elimination for finite dimensional C*-algebras, in the same sense that the ordering is the only barrier to quantifier elimination for the real field. Specifically, let $L_{C^*}$ be the language of unital C*-algebras, let $P_{\min}$ be a unary predicate symbol to be interpreted as the distance to the set of minimal projections, and let $P_{\sim}$ be a binary predicate symbol to be interpreted as the distance to the set of unitarily conjugate pairs (more detailed definitions are in Definition \ref{defn:RhoMin} and Definition \ref{defn:RhoSim}, respectively).  We prove the following:

\begin{iThm}
Every finite dimensional C*-algebra admits quantifier elimination in $L_{C^*} \cup \{P_{\min}, P_{\sim}\}$.
\end{iThm}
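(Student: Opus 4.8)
The plan is to reduce the theorem to a homogeneity statement for $\Aut(A)$ and then to an explicit analysis of what $P_{\min}$ and $P_{\sim}$ detect about the position of a finite dimensional subalgebra inside $A=\bigoplus_{i=1}^{k}M_{n_i}(\mathbb{C})$. Write $L=L_{C^*}\cup\{P_{\min},P_{\sim}\}$. First I would observe that, because $A$ is finite dimensional, every model of $\operatorname{Th}_L(A)$ is isomorphic to $A$: an ultrapower of $A$ is again $A$, and an elementary self-embedding of the finite dimensional algebra $A$ is surjective, so by the Keisler--Shelah theorem (for continuous logic) any $M\succeq A$ satisfies $A\prec M\prec M^{\mathcal U}\cong A$ with the composite an isomorphism, whence $M=A$. (Adjoining $P_{\min},P_{\sim}$, which name distances to $\Aut(A)$--invariant sets, changes neither $\Aut(A)$ nor this rigidity.) Hence $A$ is a saturated, and therefore homogeneous, model of $\operatorname{Th}_L(A)$, so by the standard criterion for quantifier elimination $\operatorname{Th}_L(A)$ has quantifier elimination if and only if any two tuples $\bar a,\bar b\in A$ with $\qftp_L(\bar a)=\qftp_L(\bar b)$ lie in the same $\Aut(A)$--orbit. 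This last statement is what I would prove.

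Fix such $\bar a,\bar b$. The $L_{C^*}$--fragment of the quantifier-free type gives the norms of all $*$-polynomials in $\bar a$, hence a $*$-isomorphism $\phi\colon B\to C$, $a_i\mapsto b_i$, where $B:=C^*(1,\bar a)$, $C:=C^*(1,\bar b)$, and $B\cong C\cong\bigoplus_s M_{p_s}(\mathbb{C})$. Because a finite dimensional C*-algebra has no proper dense subspace, \emph{every} element of $B$ is literally a $*$-polynomial in $\bar a$, and $\phi$ carries the value at $\bar a$ of a $*$-polynomial to its value at $\bar b$. Now a unital embedding of a finite dimensional C*-algebra into $A$ is determined up to $\Aut(A)$--conjugacy by its multiplicity matrix, and $\phi$ extends to an automorphism of $A$ exactly when there is a size-preserving permutation $\sigma$ of the blocks of $A$ with $\operatorname{rank}_{\sigma(i)}(\phi(e))=\operatorname{rank}_i(e)$ for every projection $e\in B$ and every $i$ --- equivalently, when $\bar a$ and $\bar b$ induce the same multiplicity matrix up to a global permutation of equal-sized blocks. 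So everything reduces to the following.

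\emph{Main step: the quantifier-free $L$-type of $\bar a$ determines the multiplicity matrix of $B\hookrightarrow A$, up to permutation of equal-sized blocks of $A$.} Having identified $B$ from the $L_{C^*}$--type, one can fix $*$-polynomials producing a system of matrix units for each summand $B_s$; the quantifier-free $L$-type then supplies the values of $P_{\min}$ and $P_{\sim}$ on the minimal projections of the $B_s$, on their orthogonal sums, and on all scalar combinations of these. The value of $P_{\min}$ records which of these projections are minimal in $A$ (i.e.\ which multiplicity rows are standard basis vectors), and $P_{\sim}$ records which pairs are unitarily conjugate in $A$ (i.e.\ which nonnegative integer combinations of the multiplicity rows coincide). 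The remaining leverage comes from evaluating $P_{\min}$ and $P_{\sim}$ on scalar shifts $\alpha+\beta e$: these are normal, so $P_{\min}(\alpha+\beta e)$ and $P_{\sim}(\alpha_1+\beta_1 e,\alpha_2+\beta_2 e)$ are distances between unitary orbits of normal elements, computed by optimal matchings of eigenvalue lists \emph{with multiplicity}; which matchings are admissible --- and hence the resulting distances --- depend on whether the relevant multiplicities lie below or above $n_i/2$, and it is exactly this sensitivity that lets one read off the individual $\mu_{s,i}$, subject to the constraints $\sum_s p_s\mu_{s,i}=n_i$ and $0\le\mu_{s,i}\le n_i$.

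I expect the Main step --- computing these orbit distances on normal elements and extracting the multiplicities from the resulting combinatorics --- to be the main obstacle; the sharper statement for full matrix algebras should come out of the same analysis with $k=1$, where the combinatorics already forces the multiplicities to be recoverable from $P_{\min}$ alone. Granting the Main step, $\phi$ extends to some $\alpha\in\Aut(A)$ with $\alpha(\bar a)=\bar b$; since every $L$-formula is invariant under $\Aut(A)$ it factors through the quantifier-free type, and by saturation together with a compactness argument this yields quantifier elimination for $\operatorname{Th}_L(A)$ --- i.e.\ for every finite dimensional C*-algebra.
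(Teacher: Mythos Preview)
Your reduction to an $\Aut(A)$--homogeneity statement via categoricity of $\op{Th}_{L}(A)$ is correct and is exactly the paper's Theorem~\ref{thm:MainQETest}. The gap is in your Main step, which you explicitly leave unproved (``I expect the Main step \ldots\ to be the main obstacle''). Your proposed attack---computing $P_{\min}$ and $P_{\sim}$ on scalar shifts $\alpha+\beta e$, analysing optimal matchings of eigenvalue lists for normal elements, and extracting the individual $\mu_{s,i}$ from how the resulting distances depend on whether multiplicities lie above or below $n_i/2$---is far more elaborate than what is needed, and you do not carry it out.

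The paper dispatches this step with two short observations that use only the \emph{zero-sets} of the new predicates on projections. First (Lemma~\ref{lem:Min:Restriction}): if an embedding $\phi\colon C\to A$ preserves $P_{\min}$, then each minimal projection of $C$ must map to a minimal projection of $A$, which forces every \emph{row} of the multiplicity matrix to be a standard basis vector---i.e.\ every domain summand has multiplicity~$1$. Second (Lemma~\ref{lem:Sim:Restriction}): once the rows are standard basis vectors, if $\phi$ also preserves $P_{\sim}$ then no two distinct domain summands can land in the same codomain summand, since their minimal projections would become unitarily conjugate in $A$ without having been conjugate in $C$; hence every \emph{column} of the multiplicity matrix is a standard basis vector as well. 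As $\phi$ is unital, the multiplicity matrix is therefore a permutation matrix matching summands of equal size, so every $L_*$-embedding is already an isomorphism, and amalgamation reduces to reordering the direct summands of $A$. You actually noted the zero-set interpretation of both predicates in passing, but then went looking for ``remaining leverage'' from distances on non-projection elements that the paper shows is entirely unnecessary.
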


Both of the predicates we are adding to the language of unital C*-algebras are definable in every finite dimensional C*-algebra (see Theorem \ref{thm:RhoMinDfbl} and Theorem \ref{thm:RhoSimDfbl}).  Our results (together with those of \cite{Eagle2016} and \cite{Eagle2017}) therefore also imply these predicates are not quantifier-free definable.  Thus $P_{\min}$ and $P_{\sim}$ are the only obstacles to quantifier elimination in the class of finite dimensional C*-algebras.

We also consider to what extent using just one of the two new predicates suffices to obtain quantifier elimination.  In the positive direction, we show:

\begin{iThm}
For every $n \in \mathbb{N}$, the full matrix algebra $M_n(\mathbb{C})$ admits quantifier elimination in $L_{C^*} \cup \{P_{\min}\}$.
\end{iThm}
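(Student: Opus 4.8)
The plan is to establish quantifier elimination via the orbit criterion, exploiting that $M_n(\mathbb C)$ is, up to isomorphism, the unique model of its theory.

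\textbf{Step 1: reduction to unitary conjugacy.} Since $M_n(\mathbb C)$ is finite-dimensional it is a compact metric structure, hence (approximately) saturated and strongly homogeneous, and it is the only model of $\mathrm{Th}(M_n(\mathbb C))$ up to isomorphism; moreover, being simple with trivial centre, all of its C*-algebra automorphisms are inner, so $\Aut(M_n(\mathbb C))=PU(n)$, a compact group, and every $*$-embedding $M_n(\mathbb C)\to M_n(\mathbb C)$ is an automorphism. Feeding this into the general criterion for quantifier elimination in continuous logic, $M_n(\mathbb C)$ admits quantifier elimination in $L_{C^*}\cup\{P_{\min}\}$ if and only if: whenever $\bar a,\bar b\in M_n(\mathbb C)$ have the same quantifier-free $L_{C^*}\cup\{P_{\min}\}$-type, there is $u\in U(n)$ with $ua_iu^*=b_i$ for all $i$. (Here one uses that in the homogeneous structure $M_n(\mathbb C)$ having the same type is the same as lying in the same $PU(n)$-orbit, the passage from ``approximately conjugate'' to ``conjugate'' coming from compactness of $U(n)$.)

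\textbf{Step 2: from the type to the embedding data.} Fix such $\bar a,\bar b$. The $L_{C^*}$-part of the common quantifier-free type is precisely the statement that $a_i\mapsto b_i$ extends to a $*$-isomorphism $\phi\colon A\to B$ of the generated unital C*-subalgebras $A:=C^*(\bar a,1)$, $B:=C^*(\bar b,1)$ of $M_n(\mathbb C)$; being finite-dimensional, $A\cong\bigoplus_{j=1}^{r}M_{n_j}(\mathbb C)$. As $\phi$ is isometric it is a homeomorphism, so the $P_{\min}$-part of the type in fact gives $P_{\min}^{M_n}(x)=P_{\min}^{M_n}(\phi(x))$ for \emph{all} $x\in A$. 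By the representation theory of finite-dimensional C*-algebras, $\phi$ is implemented by a unitary of $M_n(\mathbb C)$ exactly when the two unital inclusions of $A$ into $M_n(\mathbb C)$ — the identity inclusion and $A\xrightarrow{\phi}B\hookrightarrow M_n(\mathbb C)$ — carry the same list of multiplicities $(m_1,\dots,m_r)$; equivalently, when $\operatorname{rank}_{M_n}(e_j)=\operatorname{rank}_{M_n}(\phi(e_j))$ for a chosen minimal projection $e_j$ in each block of $A$ (the rank of a projection of $A$ inside $M_n(\mathbb C)$ being determined by these two pieces of data together with the $\phi$-invariant internal rank in $A$).

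\textbf{Step 3: the $P_{\min}$-data recovers ranks — the main point.} It remains to show that $P_{\min}^{M_n}(x)=P_{\min}^{M_n}(\phi(x))$ for all $x\in A$ forces $\phi$ to preserve $\operatorname{rank}_{M_n}$ on the projections $e_j$. Concretely, for a minimal projection $e$ of a block of $A$ one wants to read $\operatorname{rank}_{M_n}(e)$ off from the numbers $P_{\min}^{M_n}(t(e))$ as $t$ ranges over $*$-polynomials (equivalently over $C^*(e,1)\subseteq A$): using the precise description of $P_{\min}$ in Definition~\ref{defn:RhoMin} and the definability result Theorem~\ref{thm:RhoMinDfbl}, one produces for each admissible value $k$ an element $t_k(e)$ whose $P_{\min}$-value is characteristic of $\operatorname{rank}_{M_n}(e)=k$. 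Since $\phi$ intertwines $*$-polynomials and preserves $P_{\min}$, this yields $\operatorname{rank}_{M_n}(e_j)=\operatorname{rank}_{M_n}(\phi(e_j))$ for every block, hence equal multiplicity lists, hence a unitary implementing $\phi$, and the theorem follows. I expect this last step to be the real obstacle: the distance to the set of minimal projections sees only a small part of the spectral data of its argument, so the probes $t_k(e)$ must be chosen so that the multiplicity of a block is routed through exactly those features of $P_{\min}$ singled out in Definition~\ref{defn:RhoMin}, rather than merely detecting whether a block has multiplicity one. (An alternative organisation of Step~3, given our first theorem, is to show directly that in $M_n(\mathbb C)$ the predicate $P_\sim$ is quantifier-free definable from $L_{C^*}\cup\{P_{\min}\}$; for pairs of projections this amounts to the same rank-recovery statement, since there $P_\sim$ is controlled exactly by equality of ranks.)
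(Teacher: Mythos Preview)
Your Steps~1 and~2 are essentially the paper's setup: both reduce quantifier elimination to showing that any two $P_{\min}$-preserving unital $*$-embeddings of a finite-dimensional subalgebra $C$ into $M_n(\mathbb{C})$ are unitarily conjugate, which by the classification of such embeddings (Fact~\ref{fact:Embeddings}) amounts to showing they have the same Bratteli multiplicities.

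The divergence, and the gap, is Step~3. You try to read the exact rank of a minimal projection $e_j$ of $C$ inside $M_n(\mathbb{C})$ from the values $P_{\min}^{M_n}(t(e_j))$ with $t$ ranging over $C^*(e_j,1)$. This is the wrong target, and your own caveat flags the problem: for any projection $p$ of rank at least $2$ one has $\rho_{\min}^{M_n}(p)=1$, and the two-dimensional commutative algebra $C^*(e_j,1)\cong\mathbb{C}^2$ is too poor to manufacture probes separating the higher ranks. Your alternative suggestion---deriving $P_\sim$ quantifier-free from $P_{\min}$ in $M_n(\mathbb{C})$---runs into the same rank-recovery obstruction.

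The paper sidesteps this entirely. Its Lemma~\ref{lem:Min:Restriction} shows that an $L_{\min}$-embedding $C\to M_n(\mathbb{C})$ must carry every minimal projection of $C$ to a minimal (rank-one) projection of $M_n(\mathbb{C})$; hence every direct summand of $C$ occurs with multiplicity exactly~$1$. Since $M_n(\mathbb{C})$ has a single summand, there is a \emph{unique} Bratteli diagram with all source multiplicities equal to~$1$, so any two $L_{\min}$-embeddings are automatically unitarily conjugate and the amalgamation is immediate. In other words, the feature you feared was too weak---that $P_{\min}$ might ``merely detect whether a block has multiplicity one''---is exactly the mechanism the paper exploits: an $L_{\min}$-embedding can never have a summand of multiplicity greater than one, so there is nothing further to recover.
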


On the other hand, we produce an example of a finite dimensional C*-algebra which does not have quantifier elimination in $L_{C^*} \cup \{P_{\min}\}$ (Example \ref{min:issues}), and another example that does not have quantifier elimination in $L_{C^*} \cup \{P_{\sim}\}$ (Example \ref{unitary:issues}).  Thus neither adding $P_{\min}$ alone, nor adding $P_{\sim}$ alone, is sufficient to obtain quantifier elimination for all finite dimensional C*-algebras.

The remainder of the paper is organized as follows.  In Section 1 we summarize the results we will need from model theory and C*-algebras.  We assume that the reader is familiar with the basics of both C*-algebras and continuous model theory, so we only present the specific parts of those theories that we will need, and we refer the reader to other sources for a more detailed exposition.  In Section 2 we consider the consequences of adding a predicate for minimal projection to the language of C*-algebras; we show that this gives quantifier elimination for full matrix algebras, but not for all finite dimensional C*-algebras.  In Section 3 we add the predicate for unitarily conjugate pairs, and show that together with the predicate for minimal projections we obtain quantifier elimination for all finite dimensional C*-algebras.  We conclude in Section 4 with some natural questions not resolved in the present work.  Throughout the paper, all C*-algebras are assumed to be unital.

\subsection*{Acknowledgements}
We thank Ilijas Farah, Isaac Goldbring, and Ian Putnam for helpful discussions in the early stages of this project.  The second author was financially supported by a Jamie Cassels Undergraduate Research Award at the University of Victoria.

\section{Model theory and C*-algebras}
We begin with a brief presentation of the aspects of the model theory of C*-algebras that will be useful to us later.  The reader interested in more detail is encouraged to consult \cite{BenYaacov2008a} for continuous logic, \cite{Davidson1996} for C*-algebras, and \cite{Farah} for the model theory of C*-algebras.

\subsection*{C*-algebras}
Throughout this paper we consider only \emph{unital} C*-algebras.  In fact, our main focus is on finite dimensional C*-algebras (which are always unital), but we will make occasional remarks about infinite dimensional algebras as well.  By an \emph{embedding} of one C*-algebra into another, we mean an injective unital *-homomorphism.  We will use the following fact repeatedly, usually without mention.

\begin{fact}[{\cite[Theorem III.1.1]{Davidson1996}}]\label{fact:DirectSum}
Every finite dimensional C*-algebra is isomorphic to a finite direct sum of algebras of the form $M_n(\mathbb{C})$.
\end{fact}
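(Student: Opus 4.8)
The plan is the classical one: decompose $A$ first along its center, and then recognize each summand as a full matrix algebra by exhibiting a system of matrix units. The C*-axioms enter only through the norm identity $\norm{a^*a} = \norm{a}^2$.

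First I would identify the center $Z(A)$. It is a finite dimensional commutative C*-algebra, so by Gelfand duality it is $*$-isomorphic to $C(X)$ for a finite discrete space $X$, i.e.\ to $\mathbb{C}^k$ for some $k$. If $z_1, \dots, z_k$ are its minimal projections, these are central, pairwise orthogonal, and sum to $1$, so $A = \bigoplus_{i=1}^k z_i A$ as a direct sum of C*-algebras, with $Z(z_i A) = \mathbb{C} z_i$. This reduces everything to the case $Z(A) = \mathbb{C} 1$.

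For such an $A$ I would establish three facts. (1) $A$ is simple: any two-sided ideal $I$, being a finite dimensional C*-algebra, has a unit $z$, which one checks is a central projection of $A$, hence $z \in \set{0,1}$. (2) $A$ has a minimal nonzero projection $p$ with $pAp = \mathbb{C} p$: choose a nonzero projection $q$ minimizing $\dim(qAq)$; then $qAq$ has no projections other than $0$ and $q$, and a finite dimensional C*-algebra whose only projections are $0$ and its unit is $*$-isomorphic to $\mathbb{C}$ — here the norm identity is used to see that a self-adjoint element has finite spectrum and lies in the span of its (necessarily trivial) spectral projections. (3) Any two minimal projections $p, q$ are Murray--von Neumann equivalent: by simplicity $pAq \neq 0$, and for $0 \neq x \in pAq$ both $x^*x \in qAq = \mathbb{C} q$ and $xx^* \in pAp = \mathbb{C} p$ are nonzero, so after rescaling $x$ to a partial isometry $v$ one gets $v^*v = q$ and $vv^* = p$ (each being a nonzero projection in a one-dimensional corner).

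Finally I would take a maximal pairwise-orthogonal family $p_1, \dots, p_n$ of minimal projections; by (3) and maximality, $\sum_i p_i = 1$ (otherwise $1 - \sum_i p_i$ would dominate a further minimal projection). Choosing partial isometries $v_i$ with $v_i^* v_i = p_i$ and $v_i v_i^* = p_1$, the elements $e_{ij} := v_i^* v_j$ form a system of $n^2$ matrix units, and since $e_{1i} b e_{j1} \in p_1 A p_1 = \mathbb{C} e_{11}$ for all $b$, every $b \in A$ equals $\sum_{i,j} e_{i1}(e_{1i} b e_{j1}) e_{1j}$, so $A = \op{span}\set{e_{ij}}$. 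The map sending $e_{ij}$ to the standard matrix units of $M_n(\mathbb{C})$ is then a $*$-isomorphism (automatically isometric), and combining with the first step yields $A \cong \bigoplus_i M_{n_i}(\mathbb{C})$. I expect step (2)—producing a projection $p$ with $pAp$ one-dimensional—to be the crux, since it is where the C*-identity is genuinely needed; the rest is bookkeeping with matrix units. An alternative is to note that the Jacobson radical of any C*-algebra vanishes (again by the norm identity) and invoke Artin--Wedderburn, but promoting the resulting algebra isomorphism to a $*$-isomorphism still essentially requires the matrix-unit construction above.
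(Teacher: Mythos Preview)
The paper does not prove this statement at all: it records it as a \emph{Fact} with a citation to \cite[Theorem III.1.1]{Davidson1996} and moves on. Your argument is the classical one (and essentially what Davidson does): split along the center into simple summands, produce a minimal projection in each simple summand, show all minimal projections are Murray--von Neumann equivalent by simplicity, and assemble matrix units. The steps are correct as sketched; the only point I would tighten is the claim in step~(3) that $pAq \neq 0$, which follows from simplicity via $ApA = A$ (so $q \in ApAq$, forcing some $pbq \neq 0$), and the implicit use in the maximality step that any nonzero corner of a finite dimensional C*-algebra contains a minimal projection, which is just a reapplication of your step~(2).
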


We will also need a characterization of the embeddings between finite dimensional C*-algebras.

\begin{fact}[{\cite[Lemma III.2.1]{Davidson1996}}]\label{fact:Embeddings}
There is an embedding of $M_n(\mathbb{C})$ into $M_k(\mathbb{C})$ if and only if $n$ divides $k$.  Embeddings of one finite dimensional C*-algebra $A$ into another finite dimensional C*-algebra $B$ are classified, up to conjugation by a unitary in $B$, by the multiplicities with which each direct summand of $A$ is mapped into each direct summand of $B$.
\end{fact}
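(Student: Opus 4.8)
The plan is to reduce both assertions to the elementary representation theory of finite dimensional C*-algebras: namely, that every unital $*$-homomorphism from a finite dimensional C*-algebra into $M_k(\mbb{C})$ is, up to conjugation by a unitary of $M_k(\mbb{C})$, a direct sum of the standard irreducible representations of the simple summands, with well-defined multiplicities. Granting this, the divisibility claim and the classification are both bookkeeping with ranks of projections.

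First I would treat the case $A = M_n(\mbb{C})$, $B = M_k(\mbb{C})$. Given a unital embedding $\phi \colon M_n(\mbb{C}) \into M_k(\mbb{C})$, write $1_{M_n} = e_1 + \dots + e_n$ as a sum of pairwise orthogonal minimal projections. The $\phi(e_i)$ are pairwise orthogonal projections summing to $1_{M_k}$, and since the $e_i$ are pairwise unitarily equivalent in $M_n(\mbb{C})$ the $\phi(e_i)$ are pairwise unitarily equivalent in $M_k(\mbb{C})$, hence all of some common rank $m$; counting ranks gives $k = nm$, so $n \mid k$. Conversely, if $k = nm$ then $a \mapsto a \otimes 1_m$ embeds $M_n(\mbb{C})$ into $M_n(\mbb{C}) \otimes M_m(\mbb{C}) \cong M_k(\mbb{C})$. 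For the classification part: $M_n(\mbb{C})$ has, up to unitary equivalence, a unique irreducible representation, its standard action $\pi_0$ on $\mbb{C}^n$, and any unital $*$-representation on $\mbb{C}^k$ decomposes as a direct sum of copies of $\pi_0$; the number of copies is $m = k/n$, which is therefore a complete invariant for $\phi$ up to conjugation by a unitary of $M_k(\mbb{C})$.

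For the general case write $A = \bigoplus_{i=1}^r M_{n_i}(\mbb{C})$ and $B = \bigoplus_{j=1}^s M_{k_j}(\mbb{C})$, with central projections $p_i \in A$ corresponding to the summands, and let $\pi_i \colon A \to M_{n_i}(\mbb{C})$ denote the $i$-th coordinate projection (these being exactly the irreducible representations of $A$ up to equivalence). Given an embedding $\phi \colon A \into B$, compose with the coordinate projection $B \to M_{k_j}(\mbb{C})$ to get unital $*$-homomorphisms $\phi_j \colon A \to M_{k_j}(\mbb{C})$. The projections $q_{ij} := \phi_j(p_i)$ are pairwise orthogonal and sum to $1_{M_{k_j}}$, so $\phi_j$ restricts on each summand to a unital $*$-homomorphism $M_{n_i}(\mbb{C}) \to q_{ij} M_{k_j}(\mbb{C}) q_{ij} \cong M_{\op{rank}(q_{ij})}(\mbb{C})$; by the previous paragraph this forces $\op{rank}(q_{ij}) = n_i m_{ij}$ for some $m_{ij} \in \mbb{N}$, with $m_{ij} = 0$ precisely when $q_{ij} = 0$. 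Tracking ranks on the $j$-th summand yields $\sum_i n_i m_{ij} = k_j$, and injectivity of $\phi$ is equivalent to every simple summand $M_{n_i}(\mbb{C})$ meeting $\ker\phi$ trivially, i.e.\ to $m_{ij} \geq 1$ for at least one $j$. The matrix $(m_{ij})$ is visibly unchanged by conjugating $\phi$ by a unitary of $B$. Conversely, if $\phi, \psi \colon A \into B$ have the same multiplicity matrix, then for each $j$ the representations $\phi_j, \psi_j$ of $A$ on $\mbb{C}^{k_j}$ are both unitarily equivalent to $\bigoplus_i m_{ij}\,\pi_i$, so there is a unitary $u_j \in M_{k_j}(\mbb{C})$ with $\psi_j = u_j \phi_j u_j^*$; then $u := (u_1, \dots, u_s) \in B$ is a unitary with $\psi = u\phi u^*$.

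The one genuinely non-formal ingredient, used throughout, is complete reducibility of $*$-representations on finite dimensional Hilbert space together with uniqueness of the multiplicities. Complete reducibility holds because the orthogonal complement of an invariant subspace of a $*$-representation is again invariant (the representation respects adjoints), so one peels off irreducible subrepresentations one at a time; uniqueness of the multiplicities, and the identification of the irreducibles of $M_n(\mbb{C})$, are Schur's lemma in finite dimensions. I expect this representation-theoretic core to be the substantive point; the rest is accounting with ranks.
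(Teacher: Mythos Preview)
The paper does not prove this statement; it is recorded as a \texttt{fact} with a citation to \cite[Lemma III.2.1]{Davidson1996} and no accompanying proof environment. Your argument is correct and is essentially the standard proof one finds in Davidson: reduce to representations of $A$ on each simple summand $M_{k_j}(\mbb{C})$ of $B$, decompose via the central projections of $A$, and invoke complete reducibility together with Schur's lemma to identify each piece as a multiplicity of the unique irreducible of $M_{n_i}(\mbb{C})$. There is nothing to compare against in the paper itself.
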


A \emph{Bratteli diagram} is a convenient device for tracking the multiplicity information necessary for classifying an embedding between finite dimensional C*-algebras.  Given algebras $A = \bigoplus M_{n_i}(\mathbb{C})$ and $B = \bigoplus M_{m_j}(\mathbb{C})$, and an embedding $\phi : A \to B$, we let $E_\phi(n_i, m_j)$ be the multiplicity with which $\phi$ embeds $M_{n_i}$ into $M_{m_j}$; visually, we represent the Bratteli diagram as a bipartite multigraph, with the summands of $A$ as one part and the summands of $B$ as the other, and with $E_\phi(n_i, m_j)$ edges drawn between $M_{n_i}$ and $M_{m_j}$.  We define the \emph{multiplicity} of a direct summand of $A$ or $B$ to be the number of edges incident with that summand in the Bratteli diagram.  More formally, for any $i$,
\[\op{mult}_\phi(M_{n_i}(\mathbb{C})) = \sum_j E_\phi(n_i, m_j),\]
and for any $j$,
\[\op{mult}_\phi(M_{m_j}(\mathbb{C})) = \sum_i E_\phi(n_i, m_j).\]
We emphasize that in both cases the sums involved are finite, by Fact \ref{fact:DirectSum}.

An element $v$ of a C*-algebra $A$ is \emph{unitary} if $v^*v = vv^* = 1$.  If $a$ and $b$ are elements of $A$, and there is a unitary $v \in A$ such that $a = v^*bv$, then we call $a$ and $b$ \emph{unitarily conjugate in $A$}, and write $a \sim_u b$.

For each $n \in \mathbb{N}$ we let $0_n$ denote the $n \times n$ matrix with every entry equal to $0$.

\subsection*{Model theory}
We treat C*-algebras model-theoretically through the use of continuous logic.  The appropriate formalism for treating C*-algebras in continuous logic was introduced in \cite{Farah2014a}, and we also refer the reader to \cite{Farah} for a more thorough treatment.

We let $L_{C^*}$ denote the language of unital C*-algebras, and recall that there is a theory $T_{C^*}$ such that if $M$ is an $L_{C^*}$-structure and $M \models T_{C^*}$, then $M$ is a unital C*-algebra.  Atomic formulas of $L_{C^*}$ correspond to expressions of the form $\norm{p(x_1, \ldots, x_n)}$, where $p$ is a *-polynomial with complex coefficients.  An embedding of C*-algebras, as defined above, corresponds exactly to the model-theoretic notion of an $L_{C^*}$-embedding.

We recall that in continuous logic $\sup$ and $\inf$ (over balls of finite radius) play the role of the quantifiers $\forall$ and $\exists$, respectively.  We therefore say that a formula is \emph{quantifier-free} if it can be constructed from the atomic formulas by the application of continuous functions (which play the role of connectives), without using the quantifiers $\sup$ and $\inf$.  A theory $T$ has \emph{quantifier elimination} if every formula can be approximated, uniformly over all tuples in all models of $T$, by a quantifier-free formula.  More precisely:

\begin{defn}\label{defn:QE}
A theory $T$ in a language $L$ has \emph{quantifier elimination} if for every $L$-formula $\phi(\overline{x})$, and every $\epsilon > 0$, there is a quantifier-free $L$-formula $\psi_{\epsilon}(\overline{x})$ such that for every model $M$ of $T$, and every tuple $\overline{a}$ from $A$ of the same length as $\overline{x}$,
\[\abs{\phi^M(\overline{a}) - \psi_\epsilon^M(\overline{a})} \leq \epsilon.\]

If $M$ is an $L$-structure we say $M$ has quantifier elimination if $\op{Th}(M)$ does.
\end{defn}

We emphasize that whether or not a theory $T$ has quantifier elimination depends not only on $T$, but also on the language in which $T$ is expressed.  Our main goal in this paper is to find a suitable expansion of the language of C*-algebras, and a canonical way of interpreting C*-algebras as structures in the expanded language, such that in the expanded language all finite dimensional C*-algebras have quantifier elimination.

A language $L'$ is an \emph{expansion} of the language $L$ if every $L$-formula is also an $L'$-formula.  We will be considering languages that expand the language $L_{C^*}$.  If $L'$ is an expansion of $L_{C^*}$, then by an \emph{$L'$-embedding} of $A$ into $B$ we mean a unital injective *-homomorphism $f : A \to B$ such that, for every $(a_1, \ldots, a_n) \in A^n$, $P^A(a_1, \ldots, a_n) = P^B(f(a_1), \ldots, f(a_n))$.

The main test for quantifier elimination is the following.  In our case, $L$ will always be an expansion of $L_{C^*}$, but the test holds in much more generality.

\begin{thm}[{\cite[Proposition 13.6]{BenYaacov2008a}}]\label{thm:GeneralQETest}
Let $L$ be a language for metric structures and $T$ be an $L$-theory.  The following are equivalent:
\begin{enumerate}
\item{$T$ has quantifier elimination.}
\item{Whenever $A \models T$ and $B \models T$, every embedding of a substructure of $A$ can be extended to an embedding of $A$ into an ultrapower of $B$.}
\end{enumerate}
\end{thm}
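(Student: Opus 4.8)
The plan is to prove the two implications by way of the classical reformulation of quantifier elimination: $T$ has quantifier elimination if and only if quantifier-free types determine complete types over $T$, i.e.\ for all $A,B\models T$ and all tuples $\bar a\in A^n$, $\bar b\in B^n$, equality $\qftp^A(\bar a)=\qftp^B(\bar b)$ forces $\tp^A(\bar a)=\tp^B(\bar b)$. One direction of this reformulation is immediate --- if a quantifier-free $\psi_\epsilon$ is uniformly within $\epsilon$ of $\phi$ over all models of $T$ and $\bar a,\bar b$ have the same quantifier-free type, then $\abs{\phi^A(\bar a)-\phi^B(\bar b)}\leq 2\epsilon$, and letting $\epsilon\to 0$ yields equality of complete types. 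For the converse I would argue topologically: the restriction map $S_n(T)\to S_n^{qf}(T)$ on type spaces is continuous and, by hypothesis, injective, hence a homeomorphism onto its image between compact Hausdorff spaces; any formula $\phi$ then induces a continuous function on that image, which extends by the Tietze extension theorem to $S_n^{qf}(T)$ and which Stone--Weierstrass approximates uniformly by quantifier-free definable predicates (these separate points of $S_n^{qf}(T)$ and are closed under the continuous connectives), producing the $\psi_\epsilon$ of Definition \ref{defn:QE}.

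For $(1)\Rightarrow(2)$, assume $T$ has quantifier elimination; let $C\leq A$ be a substructure of a model of $T$ and $e\colon C\to B$ an embedding into another model. I would show that the quantifier-free diagram of $A$, with the constants naming $C$ reinterpreted through $e$, is consistent with the elementary diagram of $B$. By compactness this comes down to realizing, in an elementary extension of $B$, any finite family of atomic conditions $\norm{p_i(\bar y,\bar c)}=r_i$ (with $\bar c\in C$) witnessed by some tuple in $A$: the formula $\inf_{\bar y}\max_i\abs{\,\norm{p_i(\bar y,\bar c)}-r_i\,}$ vanishes at $\bar c$ in $A$, is uniformly close to a quantifier-free formula by quantifier elimination, and that quantifier-free formula is preserved by $e$, so the $\inf$-formula is small at $e(\bar c)$ in $B$, furnishing approximate witnesses. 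A model of the combined diagram is an elementary extension $B'\succeq B$ into which $A$ embeds extending $e$; one concludes using the standard fact that $B'$ embeds elementarily over $B$ into some ultrapower $B^{\mc U}$, and composing.

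The substantial part is $(2)\Rightarrow(1)$, for which I would build a sandwich. Take $A,B\models T$ and $\bar a,\bar b$ with the same quantifier-free type; then $\bar a$ and $\bar b$ realizing the same quantifier-free type provides an isomorphism $\langle\bar a\rangle_A\to\langle\bar b\rangle_B$ of the generated substructures sending $\bar a$ to $\bar b$, and composing with $\langle\bar b\rangle_B\hookrightarrow B$ and applying (2) gives an embedding $f_0\colon A\to B^{\mc U_0}$ with $f_0(\bar a)=\bar b$. The crucial point is that to reverse direction I apply (2) to the \emph{whole image} $f_0(A)\leq B^{\mc U_0}$, extending $f_0^{-1}$ to an embedding $g_1\colon B^{\mc U_0}\to A^{\mc V_0}$, so that $g_1\circ f_0$ is forced to be the diagonal embedding $A\to A^{\mc V_0}$. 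Iterating both ways yields a chain
\[ A=A_0\xrightarrow{f_0}B_1\xrightarrow{g_1}A_1\xrightarrow{f_1}B_2\xrightarrow{g_2}A_2\to\cdots \]
in which $B_{k+1}$ is an ultrapower of $B_k$, $A_{k+1}$ an ultrapower of $A_k$, each $g_{k+1}\circ f_k$ and each $f_{k+1}\circ g_{k+1}$ is the relevant diagonal embedding, and every map carries $\bar a$ to $\bar b$ and back. Then $(A_k)$ and $(B_k)$ are cofinal subsystems of the chain with diagonal --- hence elementary --- connecting maps, so their common colimit $L$ is at once an elementary extension of $A$ and of $B$, with the images of $\bar a$ and of $\bar b$ coinciding in $L$; since both elementary embeddings preserve $\phi$, this gives $\phi^A(\bar a)=\phi^B(\bar b)$ for every formula $\phi$, completing the reformulation and hence the proof.

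I expect the main obstacle to be the bookkeeping in this last step: one must set up each application of (2) so that the forward and backward maps compose to honest diagonal embeddings --- which is exactly why $f_k^{-1}$ is extended on all of $f_k(A_k)$, not merely on the substructure generated by the tuple --- and then verify carefully that the colimit of an elementary chain is an elementary extension of every term and that passing to a cofinal subsystem leaves the colimit unchanged. The remaining ingredients --- the fundamental theorem on ultraproducts (so that diagonal embeddings into ultrapowers are elementary), the compactness theorem for continuous logic, and the universality of ultrapowers invoked in $(1)\Rightarrow(2)$ --- are standard and would be cited rather than reproved.
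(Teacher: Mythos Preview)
The paper does not prove this statement; it is cited from \cite{BenYaacov2008a} and used as a black-box tool, so there is no proof in the paper to compare your proposal against. Your argument is correct and is essentially the one given in the cited reference: the Stone--Weierstrass reduction of quantifier elimination to the statement that quantifier-free types determine complete types, the compactness argument for $(1)\Rightarrow(2)$, and the back-and-forth sandwich for $(2)\Rightarrow(1)$ in which one extends over the full image at each stage so that successive compositions are diagonal, hence elementary, embeddings. The only point requiring care is the cardinality bookkeeping you already flag at the end --- the ultrafilter in $(1)\Rightarrow(2)$ must be chosen so that $B^{\mc U}$ is sufficiently saturated to realize a type of size $|A|$ (equivalently, so that the auxiliary elementary extension $B'\succeq B$ embeds into $B^{\mc U}$ over $B$) --- but you have correctly identified this as a standard fact to be cited rather than reproved.
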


Working with finite dimensional C*-algebras provides a considerable simplification to the above theorem, owing to the following fact.  This fact, in particular, explains why we never need to work with ultrapowers in this paper.

\begin{fact}\label{fact:FDEquivalence}
Let $A$ be a C*-algebra.  The following are equivalent:
\begin{itemize}
\item{$A$ is finite dimensional,}
\item{the unit ball of $A$ is compact,}
\item{every ultrapower of $A$ is isomorphic to $A$}
\item{up to $L$-isomorphism, $A$ is the only model of $\op{Th}_L(A)$, where $L$ is any language expanding $L_{C^*}$.}
\end{itemize}
\end{fact}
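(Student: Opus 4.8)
The statement to prove is Fact~\ref{fact:FDEquivalence}, the equivalence of four characterizations of finite dimensionality for a (unital) C*-algebra $A$: (i) $A$ is finite dimensional; (ii) the unit ball of $A$ is compact; (iii) every ultrapower of $A$ is isomorphic to $A$; (iv) up to $L$-isomorphism $A$ is the only model of $\op{Th}_L(A)$ for any language $L$ expanding $L_{C^*}$.

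\medskip

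\textbf{Proof proposal.} The plan is to prove the cycle $(i)\Rightarrow(ii)\Rightarrow(iii)\Rightarrow(i)$ and then handle $(iv)$ separately, showing $(i)\Rightarrow(iv)\Rightarrow(iii)$ (or $(iv)\Rightarrow(i)$ directly). The implication $(i)\Rightarrow(ii)$ is classical functional analysis: a finite dimensional normed space is linearly homeomorphic to $\mathbb{C}^m$, so its closed unit ball is compact by Heine--Borel; this needs no C*-structure. The converse direction $(ii)\Rightarrow(i)$ is the Riesz lemma: if the closed unit ball of a normed space is compact then the space is finite dimensional. So in fact $(i)\Leftrightarrow(ii)$ is purely a Banach-space fact and can be cited from any functional analysis text; I would state it that way rather than reprove it.

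For $(ii)\Rightarrow(iii)$: given any ultrafilter $\mathcal{U}$ on an index set $I$, the ultrapower $A_{\mathcal{U}}$ is by construction $\ell^\infty(I,A)/c_{\mathcal{U}}(I,A)$, where we quotient bounded sequences by those whose norm tends to $0$ along $\mathcal{U}$. There is a canonical diagonal embedding $A \hookrightarrow A_{\mathcal{U}}$, which is always isometric and unital. The point is that when the unit ball of $A$ is compact this map is \emph{surjective}: given a bounded sequence $(a_i)_{i\in I}$ with $\|a_i\|\le R$, compactness of the ball of radius $R$ means that along the ultrafilter $\mathcal{U}$ the net $(a_i)$ converges to some $a\in A$ (every net in a compact space has a limit along every ultrafilter refining its eventuality filter; more precisely $\lim_{\mathcal{U}} a_i$ exists in the norm topology), and then $(a_i)$ and the constant sequence $(a)$ differ by an element of $c_{\mathcal{U}}$, so they represent the same element of $A_{\mathcal{U}}$. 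Hence the diagonal embedding is an isomorphism. The main subtlety here is just being careful that ``limit along an ultrafilter'' is taken in the norm topology and that compactness is exactly what guarantees its existence; this is routine once set up correctly.

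For $(iii)\Rightarrow(i)$: I would argue contrapositively. If $A$ is infinite dimensional, I want to produce an ultrapower not isomorphic to $A$. One clean route: an infinite dimensional C*-algebra contains an infinite sequence of pairwise orthogonal nonzero positive elements (this uses that infinite dimensional C*-algebras are not finite direct sums of matrix algebras; one can extract an infinite-dimensional abelian subalgebra, or a masa, which then contains such a sequence), and from this one builds, in the ultrapower, elements with no preimage --- for instance the ultrapower has strictly larger density character, or one exhibits a non-separable phenomenon. Alternatively, and perhaps most cleanly, combine with the equivalence $(i)\Leftrightarrow(ii)$ already established: if $A$ is infinite dimensional its unit ball is non-compact, so there is a sequence $(a_n)$ in the unit ball with $\|a_n - a_m\|\ge \delta$ for all $n\ne m$ and some $\delta>0$; taking a nonprincipal ultrafilter $\mathcal{U}$ on $\mathbb{N}$, the element of $A_{\mathcal{U}}$ represented by $(a_n)$ is not in the image of the diagonal map (if it were equal to the class of a constant $(a)$, then $\|a_n-a\|\to 0$ along $\mathcal{U}$, forcing $a_n$ to be $\mathcal{U}$-close to each other, contradicting the separation), and one checks the diagonal map is the \emph{only} unital embedding up to the relevant identification so $A_{\mathcal{U}}\not\cong A$ --- or, more robustly, one shows $A_{\mathcal{U}}$ has a unit ball that is not compact by the same sequence, hence $A_{\mathcal{U}}$ is infinite dimensional, but that alone doesn't separate it from $A$. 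This last point is the genuine obstacle: showing $A_{\mathcal{U}}\not\cong A$ as abstract C*-algebras, not merely that the diagonal map fails to be onto. The standard fix is a cardinality/density-character count: for $A$ infinite dimensional and $\mathcal{U}$ a nonprincipal ultrafilter on $\mathbb{N}$, the density character of $A_{\mathcal{U}}$ is at least $2^{\aleph_0}$ while $A$ can be taken separable --- but $A$ need not be separable in general, so one instead argues that $\operatorname{density}(A_{\mathcal{U}}) > \operatorname{density}(A)$ whenever $A$ is infinite dimensional, using that an infinite dimensional $A$ has a separable infinite dimensional subalgebra $A_0$ whose ultrapower $(A_0)_{\mathcal{U}}$ has density $2^{\aleph_0}$ and embeds in $A_{\mathcal{U}}$ in a way that ``spreads out'' — a short argument via almost-orthogonal sequences. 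I expect this density-character argument to be the part requiring the most care.

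Finally, $(i)\Leftrightarrow(iv)$: the implication $(iv)\Rightarrow(iii)$ is immediate since ultrapowers are elementarily equivalent to $A$ (for the reduct to $L_{C^*}$, and for any expansion the predicates are interpreted on the ultrapower by the usual \L o\'s theorem), so $(iv)$ forces $A_{\mathcal{U}}\cong_L A$, and in particular $\cong A$ as C*-algebras; thus $(iv)\Rightarrow(iii)\Rightarrow(i)$. Conversely $(i)\Rightarrow(iv)$: if $A$ is finite dimensional, say $A\cong\bigoplus_{k=1}^r M_{n_k}(\mathbb{C})$ by Fact~\ref{fact:DirectSum}, then this isomorphism type is pinned down by a single $L_{C^*}$-sentence (indeed by the \emph{quantifier-free} data: the existence of a system of matrix units with the right relations, expressible as the value of a sup-inf-free formula being $0$, together with a sentence asserting that every element is within $\epsilon$ of the linear span of those matrix units, for each $\epsilon$), so any model of $\op{Th}_{L_{C^*}}(A)$ is isomorphic to $A$; and since the new predicates $P_{\min}, P_{\sim}$ are determined by the C*-algebra structure (they measure distances to sets defined in terms of that structure), an $L_{C^*}$-isomorphism between finite dimensional algebras is automatically an $L$-isomorphism for $L = L_{C^*}\cup\{P_{\min},P_{\sim}\}$ or indeed any expansion whose new symbols are interpreted definably. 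This last clause — that an $L_{C^*}$-isomorphism respects the added predicates — is where one uses that $P_{\min}$ and $P_{\sim}$ have canonical interpretations intrinsic to the C*-algebra; it is essentially a definition-chase and presents no difficulty.
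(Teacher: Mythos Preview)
The paper states Fact~\ref{fact:FDEquivalence} without proof, so there is no argument of the authors' to compare against; you are supplying what they leave as folklore. Your treatment of $(i)\Leftrightarrow(ii)$, $(ii)\Rightarrow(iii)$, and $(iv)\Rightarrow(iii)$ is correct, and for $(iii)\Rightarrow(i)$ you have correctly located the real obstacle (non-surjectivity of the diagonal embedding does not by itself yield non-isomorphism) and the right fix: a density-character count. To make that count work uniformly, not just for separable $A$, take a $\kappa$-regular ultrafilter on an index set of size $\kappa=\operatorname{density}(A)$; your $\epsilon$-separated sequence in the unit ball then produces an $\epsilon$-separated family in $A^{\mathcal U}$ indexed by $\mathbb N^{\kappa}/\mathcal U$, which has cardinality $>\kappa$.

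There is, however, a genuine gap in your $(i)\Rightarrow(iv)$. You establish $L_{C^*}$-categoricity and then pass to expansions only under the hypothesis that the new symbols are ``interpreted definably''; but the Fact as stated is for \emph{arbitrary} expansions $L$ and arbitrary interpretations of the new symbols on $A$, and your claim that an $L_{C^*}$-isomorphism is automatically an $L$-isomorphism is simply false without definability (add a constant naming an element, for instance). The repair is to bypass that step and reuse what you already proved. If $B\models\op{Th}_L(A)$ then its $L_{C^*}$-reduct is elementarily equivalent to $A$, hence finite dimensional (total boundedness of the unit ball is witnessed by the values of the sentences $\inf_{y_1,\ldots,y_N}\sup_x\min_i\norm{x-y_i}$). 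Thus both $A$ and $B$, as $L$-structures, have compact unit balls, and by your $(ii)\Rightarrow(iii)$ each is $L$-isomorphic to every one of its $L$-ultrapowers. Keisler--Shelah for continuous logic then gives $A\cong_L A^{\mathcal U}\cong_L B^{\mathcal V}\cong_L B$, with no definability assumption on the extra symbols. (For the paper's actual applications only the expansions $L_{\min}$ and $L_*$ are needed, and there your restricted argument does suffice, since $\rho_{\min}$ and $\rho_{\sim}$ are definable; but that does not prove the Fact as written.)
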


Combining Theorem \ref{thm:GeneralQETest} with Fact \ref{fact:FDEquivalence} gives the following test for quantifier elimination, which will be our main tool.

\begin{thm}\label{thm:MainQETest}
Let $A$ be a finite dimensional C*-algebra, and let $L$ be a language expanding the language $L_{C^*}$ of C*-algebras.  The following are equivalent:
\begin{enumerate}
\item{
The $L$-theory $\op{Th}_L(A)$ has quantifier elimination,}
\item{for any $L$-substructure $C$ of $A$, and any two $L$-embeddings $\phi:C \to A$ and $\psi:C \to A$, there exists an $L$-automorphism $\theta:A \to A$ such that $\theta \circ \psi = \phi$; that is, the following diagram commutes.
	\begin{equation*}\label{fin dim amalgamation diagram}
	\begin{tikzcd}[ampersand replacement = \&]
	A \arrow[r,"\theta"] \& A\\
	C \arrow[u, "\phi"] \arrow[ur, "\psi"']
	\end{tikzcd}
	\end{equation*}
}
\end{enumerate}
\end{thm}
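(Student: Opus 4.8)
The plan is to derive this from the general quantifier-elimination test, Theorem \ref{thm:GeneralQETest}, by using Fact \ref{fact:FDEquivalence} to strip away the ultrapowers and replace ``embedding extending an embedding of a substructure'' by ``automorphism of $A$ commuting with a pair of embeddings from a substructure.'' Since Theorem \ref{thm:GeneralQETest} involves two models $A \models T$ and $B \models T$, the first observation to record is that when $T = \op{Th}_L(A)$ for a finite dimensional $A$, Fact \ref{fact:FDEquivalence} says $A$ is, up to $L$-isomorphism, the \emph{only} model of $T$; so we may take $B = A$ throughout, and moreover every ultrapower $A^{\mathcal U}$ is $L$-isomorphic to $A$, so ``embedding into an ultrapower of $B$'' becomes simply ``embedding into $A$.''

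For the implication (1) $\Rightarrow$ (2): assume $\op{Th}_L(A)$ has quantifier elimination, let $C$ be an $L$-substructure of $A$ and $\phi, \psi : C \to A$ two $L$-embeddings. First I would note that $\phi$ may be viewed as an embedding of the substructure $\psi(C) \subseteq A$ into $A$ (namely $\phi \circ \psi^{-1}$, which makes sense as $\psi$ is injective), extending... rather, I would apply Theorem \ref{thm:GeneralQETest}(2) with the substructure $\psi(C)$ of $A$ and the embedding $\phi \circ \psi^{-1} : \psi(C) \to A$: by quantifier elimination this extends to an embedding $\theta : A \to A^{\mathcal U}$, and by Fact \ref{fact:FDEquivalence} we may replace $A^{\mathcal U}$ by $A$ and note that any $L$-embedding of the finite dimensional $A$ into (an $L$-isomorphic copy of) $A$ is surjective, since $\dim$ is preserved, hence an $L$-automorphism. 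By construction $\theta \restriction \psi(C) = \phi \circ \psi^{-1}$, i.e. $\theta \circ \psi = \phi$, which is exactly the commuting triangle.

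For the converse (2) $\Rightarrow$ (1): given an embedding $g : D \to A$ of an $L$-substructure $D$ of $A$, I want to extend it to an embedding $A \to A^{\mathcal U}$; using Fact \ref{fact:FDEquivalence} it suffices to extend it to an $L$-embedding $A \to A$. Let $C = D$, let $\psi : C \to A$ be the inclusion and $\phi = g : C \to A$; both are $L$-embeddings, so (2) supplies an $L$-automorphism $\theta : A \to A$ with $\theta \circ \psi = \phi$, i.e. $\theta$ restricted to $D$ equals $g$, so $\theta$ is the desired extension. Applying this for $B = A$ (legitimate since $A$ is the only model of $T$) verifies Theorem \ref{thm:GeneralQETest}(2) and hence gives quantifier elimination.

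The only genuinely delicate point, and the one I would make sure to state carefully, is the passage between the two formulations of the substructure-extension condition: Theorem \ref{thm:GeneralQETest} speaks of extending an embedding \emph{of a substructure of $A$}, whereas the statement here involves \emph{two} embeddings out of an abstract $L$-substructure $C$. The reconciliation is the elementary observation that precomposing or postcomposing with the isomorphism $C \cong \psi(C)$ (respectively $C \cong \phi(C)$) converts one picture into the other; combined with the fact that, in finite dimensions, an injective $L$-homomorphism $A \to A$ is automatically an $L$-automorphism (because it is dimension-preserving and hence surjective), and with Fact \ref{fact:FDEquivalence} eliminating the ultrapowers, everything goes through routinely. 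I do not anticipate any analytic difficulty; the content is entirely in correctly matching up the quantifiers of the two test conditions.
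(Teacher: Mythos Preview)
Your proposal is correct and follows exactly the approach the paper intends: the paper simply states that the result follows by ``combining Theorem \ref{thm:GeneralQETest} with Fact \ref{fact:FDEquivalence},'' and you have supplied precisely those details, including the key observations that $A$ is the unique model of its theory, that ultrapowers may be replaced by $A$ itself, and that an $L$-embedding $A \to A$ is automatically an automorphism by a dimension count. There is nothing to add.
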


Theorem \ref{thm:MainQETest} relies heavily on the fact that a finite dimensional C*-algebra is the only model of its theory.  This produces significant difficulties in the infinite dimensional case (see Section \ref{sec:Questions}).

\section{Minimal projections}\label{sec:Min}
In this section we consider the consequences of adding a new symbol to the language of C*-algebras, representing the distance to the set of minimal projections.

\begin{defn}
Let $A$ be a C*-algebra.  A \emph{projection} in $A$ is an element $p \in A$ such that $p^2=p^*=p$.  A nonzero projection $p$ is \emph{minimal} if whenever $q$ is another projection in $A$, and $qp=q$, then $q=0$.
\end{defn}

If $A = B(H)$, then a minimal projection is precisely the orthogonal projection onto a subspace of $H$ of dimension $1$; that is, $p$ is minimal if and only if $pB(H)p \cong \mathbb{C}$.  In general C*-algebras the condition $pAp \cong \mathbb{C}$ is strictly stronger than $p$ being minimal, but in finite dimensional C*-algebras (which is the case we are concerned with here), these two notions are equivalent.  The following lemma follows directly from Fact \ref{fact:DirectSum}.

\begin{lem}\label{lem:Min:MinimalRank1}
Let $A$ be a finite dimensional C*-algebra, and let $p$ be a projection in $A$.  Then $pAp \cong \mathbb{C}$ if and only if $p$ is a minimal projection in $A$.
\end{lem}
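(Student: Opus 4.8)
The plan is to use the structure theorem (Fact \ref{fact:DirectSum}) to reduce everything to a concrete computation in a direct sum of matrix algebras. First I would write $A \cong \bigoplus_{i=1}^m M_{n_i}(\mathbb{C})$, so that a projection $p \in A$ corresponds to a tuple $(p_1, \dots, p_m)$ with each $p_i$ a projection in $M_{n_i}(\mathbb{C})$, i.e.\ the orthogonal projection onto a subspace $V_i \subseteq \mathbb{C}^{n_i}$. A direct calculation (or an appeal to the fact that $p_i M_{n_i}(\mathbb{C}) p_i \cong M_{r_i}(\mathbb{C})$ where $r_i = \dim V_i = \operatorname{rank}(p_i)$) shows that $pAp \cong \bigoplus_{i=1}^m M_{r_i}(\mathbb{C})$. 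Hence $pAp \cong \mathbb{C}$ if and only if exactly one $r_i$ equals $1$ and all the others are $0$; equivalently, $p = (0_{n_1}, \dots, 0_{n_{i-1}}, e, 0_{n_{i+1}}, \dots, 0_{n_m})$ for some rank-one projection $e \in M_{n_i}(\mathbb{C})$.

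Next I would show this latter condition is exactly the definition of minimality. For the ``only if'' direction: suppose $pAp \cong \mathbb{C}$, so $p$ has the form just described, and let $q$ be a projection with $qp = q$. Writing $q = (q_1, \dots, q_m)$, the equation $qp=q$ forces $q_j = 0$ for $j \neq i$ and $q_i p_i = q_i$ in $M_{n_i}(\mathbb{C})$; since $p_i$ is a rank-one projection and $q_i$ a projection dominated by it, $q_i$ is either $0$ or $p_i$, and in either case $q \leq p$ with $q$ a projection inside $pAp \cong \mathbb{C}$, which has only the projections $0$ and $1$. A nonzero such $q$ would equal $p$, but we need $q$ arbitrary dominated by $p$ — actually the condition ``$qp=q$ implies $q=0$'' is what we must verify, so I should instead argue: if $qp = q$ and $q \neq 0$ then $q \le p$ so $q \in pAp$, hence $q = p \ne 0$; but then $p$ is not minimal unless... here I need to recheck the definition. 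The definition says $p$ minimal when: for every projection $q$, if $qp=q$ then $q=0$. That cannot be literally right for any nonzero $p$ since $q=p$ satisfies $qp=q$. So the intended reading must be that $q$ ranges over projections with $q \neq p$, or the intended statement is $q = 0$ \emph{or} $q = p$; I would state clearly that I interpret minimality as: the only subprojections of $p$ are $0$ and $p$ (equivalently, $q p = q$ implies $q \in \{0,p\}$), matching the standard notion.

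With that reading settled, the ``only if'' direction is immediate from the above: if $pAp \cong \mathbb{C}$ then $pAp$ has exactly two projections, $0$ and $p$, and every subprojection of $p$ lies in $pAp$. For the ``if'' direction (contrapositive): suppose $pAp \not\cong \mathbb{C}$. Then in the decomposition either some $r_i \geq 2$, or at least two indices $i$ have $r_i \geq 1$. In the first case, inside $M_{n_i}(\mathbb{C})$ the rank-$r_i$ projection $p_i$ strictly dominates a rank-one projection $e$; then $q = (0, \dots, 0, e, 0, \dots, 0)$ is a nonzero projection with $qp = q$ and $q \neq p$. In the second case, the ``restriction'' of $p$ to a single nonzero summand gives a nonzero proper subprojection. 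Either way $p$ is not minimal. I would also remark that when $pAp \cong \mathbb{C}$, the unit of the corner $pAp$ is $p$ itself, which is why $p \neq 0$; and I should not forget to note that ``$p$ minimal'' in the definition already excludes $p = 0$.

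The proof is essentially routine once the structure theorem is invoked; the only genuine subtlety — and the step I would be most careful about — is pinning down the precise meaning of the minimality condition as stated, since taken literally it is vacuous, and then making sure the corner algebra computation $pAp \cong \bigoplus_i p_i M_{n_i}(\mathbb{C}) p_i \cong \bigoplus_i M_{\operatorname{rank}(p_i)}(\mathbb{C})$ is justified (this is standard: conjugating $p_i$ by a unitary to $\operatorname{diag}(1,\dots,1,0,\dots,0)$ exhibits the corner as the top-left block). I expect no real obstacle beyond bookkeeping.
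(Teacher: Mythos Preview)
Your approach is correct and is exactly what the paper has in mind: the paper gives no explicit proof, simply stating that the lemma follows directly from Fact~\ref{fact:DirectSum}, and your argument is a careful unpacking of precisely that reduction to matrix summands. Your worry about the definition of minimality is unfounded, however---the paper's definition reads ``whenever $q$ is \emph{another} projection,'' so $q \neq p$ is already built in, and with that reading your argument goes through without the detour.
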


\begin{defn}\label{defn:RhoMin}
We denote by $\rho_{\op{min}}$ the predicate measuring the distance to the set of minimal projections.  That is, in any C*-algebra $A$,
\[\rho_{\op{min}}^A(x) = \inf\{\norm{x - p} : p \in A, p \text{ is a minimal projection}\}.\]
\end{defn}

\begin{thm}\label{thm:RhoMinDfbl}
The predicate $\rho_{\op{min}}$ is definable in each finite dimensional C*-algebra.
\end{thm}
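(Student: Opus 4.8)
The plan is to show that $\rho_{\min}^A$ is a definable predicate by exhibiting it as the uniform limit of quantifier-free formulas — or, more directly, by verifying the standard criterion for definability: the zero-set of $\rho_{\min}^A$ (the minimal projections) should be picked out by a condition that behaves well under the logic topology, and $\rho_{\min}^A$ should be realized as an infimum of a definable family. Since we are only ever working in finite dimensional C*-algebras, where by Fact~\ref{fact:FDEquivalence} the unit ball is compact and each algebra is the unique model of its theory, there are no ultrapower subtleties: it suffices to express $\rho_{\min}^A$ using $\inf$ over a bounded set cut out by quantifier-free conditions, together with a modulus of uniform continuity that is independent of the structure. First I would write $\rho_{\min}^A(x) = \inf\{\norm{x-p} : p \text{ a minimal projection in } A\}$ and observe that ``$p$ is a projection'' is quantifier-free ($\norm{p^2-p} = 0$ and $\norm{p^*-p}=0$), so the only real content is making ``minimal'' into a definable condition.

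The key step is to use Lemma~\ref{lem:Min:MinimalRank1}: in a finite dimensional C*-algebra, a projection $p$ is minimal if and only if $pAp \cong \mathbb{C}$, i.e. $p$ is a nonzero projection with $\dim(pAp) = 1$. I would translate ``$\dim(pAp) = 1$'' into first-order data: $pAp = \mathbb{C}p$ means that for every $a$ in the unit ball, $pap$ is a scalar multiple of $p$, equivalently $\norm{pap - \tau(pap)\,p}$ is small for the appropriate scalar; more cleanly, $p$ is minimal iff $p \neq 0$ and $\sup_{\norm{a}\le 1}\inf_{\lambda \in \mathbb{C}, |\lambda|\le 1}\norm{pap - \lambda p} = 0$. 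This gives a formula $\chi(p)$ (built from $\sup$, $\inf$, and quantifier-free matter) whose zero-set among projections is exactly the set of minimal projections. Then I would argue that the function $x \mapsto \inf\{\norm{x-p} : p \text{ projection}, \chi(p) \le \delta\}$ converges, as $\delta \to 0$, uniformly to $\rho_{\min}^A$; compactness of the unit ball (Fact~\ref{fact:FDEquivalence}) makes the relevant infima attained and the convergence uniform on bounded sets, and then an appeal to the standard characterization of definable predicates (a predicate is definable iff it is the limit of a sequence of formulas, uniformly over all models — here there is only one model) finishes the argument.

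The main obstacle I expect is the uniformity/stability issue: a priori a projection $p$ with $\chi(p)$ merely \emph{small} (rather than zero) need not be \emph{close} to an actual minimal projection, so the family of approximating formulas could fail to converge to $\rho_{\min}^A$. Overcoming this requires a quantitative version of Lemma~\ref{lem:Min:MinimalRank1}: if $p$ is a projection with $\dim(pAp)$ not equal to $1$ in spirit but ``$pAp$ is within $\delta$ of being $1$-dimensional,'' one must show $p$ is within $f(\delta)$ of a genuine minimal projection, with $f(\delta) \to 0$. In finite dimensions this should follow from a perturbation argument: projections that are close are unitarily conjugate (standard C*-algebra fact), and the finitely many possible ``ranks'' of a projection in $A = \bigoplus M_{n_i}(\mathbb{C})$ form a discrete set, so an almost-minimal projection must in fact have a bona fide minimal projection nearby — the discreteness of the rank data is what upgrades ``approximately minimal'' to ``close to minimal.'' Making this modulus uniform (it depends only on $A$, which is fine, since definability is relative to a fixed structure) is the technical heart, and I would isolate it as a lemma before assembling the definability proof.
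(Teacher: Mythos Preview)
Your approach is correct and reaches the same conclusion, but it proceeds quite differently from the paper's proof. The paper's argument is a two-line reduction: Lemma~\ref{lem:Min:MinimalRank1} identifies minimal projections with rank~$1$ projections (those $p$ with $pAp\cong\mathbb{C}$), and then definability of the distance to rank~$1$ projections is obtained by citing \cite[Lemma~3.13.4]{Farah}, using that the property ``a C*-algebra is isomorphic to $\mathbb{C}$'' is both elementary and co-elementary. No explicit formula or stability estimate is written down.

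You also invoke Lemma~\ref{lem:Min:MinimalRank1}, but instead of citing general machinery you build the predicate by hand: an explicit $\chi(p)$ detecting $pAp\cong\mathbb{C}$, followed by a weak-stability argument to upgrade ``$\chi(p)$ small'' to ``$p$ close to a minimal projection.'' Your stability step leans on compactness of the unit ball (Fact~\ref{fact:FDEquivalence}) and the discreteness of rank data in a fixed finite dimensional algebra, which is a perfectly valid substitute for the black-box lemma. The trade-off: the paper's route is shorter and, via the cited lemma, shows that the distance to rank~$1$ projections is definable \emph{uniformly} across all C*-algebras (the use of finite dimensionality enters only through Lemma~\ref{lem:Min:MinimalRank1}); your route is self-contained but the stability modulus you extract is genuinely tied to the particular algebra $A$, which is exactly the non-uniformity the paper flags later in Section~\ref{sec:Questions}. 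One small wording issue: your opening sentence speaks of approximating $\rho_{\min}^A$ by \emph{quantifier-free} formulas, but your $\chi$ already contains $\sup$ and $\inf$; you only need (and only achieve) approximation by formulas, not quantifier-free ones, so that phrase should be dropped.
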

\begin{proof}
In any finite dimensional C*-algebra $A$, $\rho_{\op{min}}^A$ is exactly the distance to the set of rank 1 projections, by Lemma \ref{lem:Min:MinimalRank1}.  The distance to the set of rank 1 projections is definable by \cite[Lemma 3.13.4]{Farah}, because the property of a C*-algebra being isomorphic to $\mathbb{C}$ is elementary and co-elementary.
\end{proof}

We let $L_{\min} = L_{C^*} \cup \{P_{\min}\}$, where $P_{\min}$ is a new unary predicate symbol. 

We next show that, for the class of full matrix algebras, $\rho_{\op{min}}$ is not quantifier-free definable, and is, in a sense, the only obstacle to quantifier elimination for such algebras.

\begin{lem}\label{lem:Min:Restriction}
Let $A = \bigoplus_{j=1}^k M_{n_j}(\mathbb{C})$ and $B = \bigoplus_{j=1}^l M_{m_j}(\mathbb{C})$ be matrix algebras, and let $\phi:A \to B$ be a injective unital $^*$-homomorphism.  If $\phi$ is an $L_{\min}$-embedding, then $\mult_\phi(M_{n_j}(\mathbb{C})) = 1$ for each $j$.
\end{lem}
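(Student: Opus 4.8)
The plan is to argue by contraposition: if some summand $M_{n_j}(\mathbb{C})$ of $A$ has multiplicity at least $2$ under $\phi$, then $\phi$ fails to preserve $P_{\min}$, so it is not an $L_{\min}$-embedding. First I would fix such a $j$ and let $p \in M_{n_j}(\mathbb{C}) \subseteq A$ be a rank $1$ projection in that summand (extended by $0$ in the other summands); by Lemma \ref{lem:Min:MinimalRank1}, $p$ is a minimal projection in $A$, so $\rho_{\op{min}}^A(p) = 0$, and hence if $\phi$ preserved $P_{\min}$ we would need $\rho_{\op{min}}^B(\phi(p)) = 0$, i.e. $\phi(p)$ is (at distance $0$ from, hence equal to, since the set of minimal projections is closed in a finite dimensional algebra) a minimal projection of $B$.

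The key computation is then to show $\phi(p)$ is \emph{not} minimal in $B$ whenever $\op{mult}_\phi(M_{n_j}(\mathbb{C})) \geq 2$. Using Fact \ref{fact:Embeddings} and the Bratteli diagram description, write $\phi$ restricted to the $j$-th summand, followed by the projection onto each summand $M_{m_i}(\mathbb{C})$ of $B$, up to unitary conjugacy, as $x \mapsto \op{diag}(x, \ldots, x, 0_{r_i})$ with $E_\phi(n_j, m_i)$ copies of $x$. Then $\phi(p)$ is unitarily conjugate in $B$ to the block-diagonal element whose $i$-th block is $\op{diag}(p, \ldots, p, 0_{r_i})$ with $E_\phi(n_j, m_i)$ copies of the rank $1$ projection $p$. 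The rank of this block is exactly $E_\phi(n_j, m_i)$, so the total rank of $\phi(p)$ in $B$ is $\sum_i E_\phi(n_j, m_i) = \op{mult}_\phi(M_{n_j}(\mathbb{C})) \geq 2$. A projection in $B = \bigoplus_i M_{m_i}(\mathbb{C})$ is minimal if and only if it is rank $1$ in one summand and $0$ in all others, so $\phi(p)$ is not minimal, hence $\rho_{\op{min}}^B(\phi(p)) > 0$ (indeed $\geq$ some explicit positive constant, but positivity alone suffices). This contradicts $\rho_{\op{min}}^A(p) = 0 = \rho_{\op{min}}^B(\phi(p))$, so no summand can have multiplicity exceeding $1$.

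The main obstacle is keeping the bookkeeping of the block structure honest: one must be careful that minimality in $A$ is genuinely detected at the level of a \emph{single} summand (this is exactly Lemma \ref{lem:Min:MinimalRank1}, so it is handled), and that the image $\phi(p)$, which a priori only has a nice form up to unitary conjugation in $B$, still has a well-defined rank in each summand of $B$ — rank being a unitary invariant, this is fine. A minor point worth stating explicitly is why $\rho_{\op{min}}^B(\phi(p)) = 0$ would force $\phi(p)$ to actually be a minimal projection: in a finite dimensional C*-algebra the set of minimal projections (being the set of rank $1$ projections, which is a norm-closed subset of the compact unit ball) is closed, so the infimum in Definition \ref{defn:RhoMin} is attained. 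The rest is routine.
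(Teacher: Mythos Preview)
Your argument is correct and follows essentially the same route as the paper's proof: both show that an $L_{\min}$-embedding must send a minimal projection $p$ in the $j$-th summand of $A$ to a minimal (rank $1$) projection in $B$, and then observe that $\phi(p)$ has rank equal to $\op{mult}_\phi(M_{n_j}(\mathbb{C}))$, forcing this multiplicity to be $1$. You are slightly more explicit than the paper about the closedness argument ensuring $\rho_{\op{min}}^B(\phi(p))=0$ implies $\phi(p)$ is actually minimal, and about the block-diagonal rank count, whereas the paper phrases the conclusion via unitary conjugacy to a fixed rank $1$ projection $f_i$ in $B$; these are cosmetic differences.
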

\begin{proof}
If $\phi$ is an $L_{\min}$-embedding, then $\phi$ preserves rank:
\begin{equation}
e_{j} = 0_{n_1} \oplus \cdots \oplus 0_{n_{j-1}} \oplus \begin{bmatrix}1 & & \\ & \ddots & \\ & & 0\end{bmatrix} \oplus 0_{n_{j+1}} \oplus\cdots 0_{n_k} \in A
\end{equation}
and similarly 
\begin{equation}\label{f thing}
f_{j} = 0_{m_1} \oplus \cdots \oplus 0_{m_{j-1}} \oplus \begin{bmatrix}1 & & \\ & \ddots & \\ & & 0\end{bmatrix} \oplus 0_{m_{j+1}} \oplus\cdots 0_{m_l} \in B.
\end{equation}
A projection in $A$ is minimal if and only if it is unitarily conjugate in $A$ to $f_i$ for some $i$, and similarly a projection in $B$ is minimal if and only if it is unitarily conjugate in $B$ to some $f_j$. Since $\phi$ is a unital $^*$-homomorphism, $\phi$ takes unitarily conjugate pairs to unitarily conjugate pairs. Hence, for each minimal projection $p \in A$, \[
p \sim_u e_j \Longrightarrow \phi(p) \sim_u f_{i}
\] 
for some $i$ and $j$. If $\mult_\phi(M_{n_j}(\mathbb{C})) > 1$, then $\phi(p)$ is the sum of multiple mutually orthogonal projections and therefore has rank greater than $1$. Since $f_i$ has rank $1$, the implication above tells us that $\mult_\phi(M_{n_j}(\mathbb{C})) = 1$.
\end{proof}

\begin{thm}\label{thm:Min:MatrixAlgebras}
For every $n \in \mathbb{N}$, the $L_{\op{min}}$-theory of $M_n(\mathbb{C})$ has quantifier elimination.
\end{thm}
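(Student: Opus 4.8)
The plan is to apply the main quantifier elimination test, Theorem \ref{thm:MainQETest}, to $A = M_n(\mathbb{C})$ in the language $L_{\min}$. So I must show: given any $L_{\min}$-substructure $C$ of $M_n(\mathbb{C})$ and any two $L_{\min}$-embeddings $\phi, \psi \colon C \to M_n(\mathbb{C})$, there is an $L_{\min}$-automorphism $\theta$ of $M_n(\mathbb{C})$ with $\theta \circ \psi = \phi$. The first step is to understand the substructure $C$. Since $C$ is closed under the C*-operations, it is itself a finite dimensional C*-algebra, hence by Fact \ref{fact:DirectSum} of the form $\bigoplus_{j=1}^k M_{n_j}(\mathbb{C})$, included in $M_n(\mathbb{C})$ via some unital injective $^*$-homomorphism. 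A subtlety is that an $L_{\min}$-substructure must also respect $P_{\min}$ in the sense that $\rho_{\op{min}}$ computed inside $C$ agrees with $\rho_{\op{min}}$ computed in $M_n(\mathbb{C})$; equivalently, the inclusion $C \hookrightarrow M_n(\mathbb{C})$ is itself an $L_{\min}$-embedding, so by Lemma \ref{lem:Min:Restriction} it has $\mult(M_{n_j}(\mathbb{C})) = 1$ for each summand of $C$.

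The key structural input is that every $L_{\min}$-embedding of $C$ into $M_n(\mathbb{C})$ is, by Lemma \ref{lem:Min:Restriction}, multiplicity one on each summand of $C$. Combined with Fact \ref{fact:Embeddings}, which says that an embedding of finite dimensional C*-algebras is determined up to unitary conjugacy in the target by its multiplicities, this means that \emph{any} two $L_{\min}$-embeddings $\phi, \psi \colon C \to M_n(\mathbb{C})$ have the same multiplicity data — namely all multiplicities equal to $1$ — and hence are unitarily conjugate: there is a unitary $v \in M_n(\mathbb{C})$ with $\phi(c) = v^* \psi(c) v$ for all $c \in C$. The map $\theta = \op{Ad}(v)$ is then an automorphism of $M_n(\mathbb{C})$ with $\theta \circ \psi = \phi$. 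The only thing left to check is that $\theta$ is an $L_{\min}$-automorphism, i.e. that it preserves $P_{\min}$; but $\op{Ad}(v)$ is a C*-algebra automorphism, so it permutes the minimal projections of $M_n(\mathbb{C})$ (they are exactly the rank one projections, by Lemma \ref{lem:Min:MinimalRank1}), and therefore preserves the distance function $\rho_{\op{min}}$. By Theorem \ref{thm:MainQETest}, $\op{Th}_{L_{\min}}(M_n(\mathbb{C}))$ has quantifier elimination.

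The step I expect to require the most care is verifying that the multiplicity constraint from Lemma \ref{lem:Min:Restriction} really does pin down the embeddings up to unitary conjugacy in this situation. Fact \ref{fact:Embeddings} classifies embeddings $A \to B$ by the full multiplicity matrix $E_\phi(n_i, m_j)$, whereas Lemma \ref{lem:Min:Restriction} only controls the row sums $\op{mult}_\phi(M_{n_j}(\mathbb{C})) = \sum_j E_\phi(n_j, m_j)$. However, because the target $B = M_n(\mathbb{C})$ has a single summand, the multiplicity matrix has just one column, so the row sum \emph{is} the single entry $E_\phi(n_j, n)$, and the constraint $\op{mult}_\phi = 1$ forces $E_\phi(n_j, n) = 1$ for every $j$. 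Thus the multiplicity data is completely determined, and Fact \ref{fact:Embeddings} applies directly to conclude that $\phi$ and $\psi$ are conjugate by a unitary of $M_n(\mathbb{C})$. (This is also where the hypothesis that the target is a \emph{full} matrix algebra, rather than an arbitrary finite dimensional C*-algebra, is genuinely used — with several summands in the target, the row sum would no longer determine the individual entries, which is exactly the phenomenon behind Example \ref{min:issues}.) One should also double-check the dimension bookkeeping: an $L_{\min}$-embedding $C \to M_n(\mathbb{C})$ with all multiplicities one requires $\sum_j n_j \le n$, and a genuine unital embedding forces $\sum_j n_j r_j = n$ where $r_j$ is the multiplicity, so here $\sum_j n_j = n$; this is automatically consistent and causes no trouble, but it is worth noting that it constrains which $C$ can arise.
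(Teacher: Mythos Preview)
Your proof is correct and follows essentially the same route as the paper: apply Lemma~\ref{lem:Min:Restriction} to see that every $L_{\min}$-embedding of $C$ into $M_n(\mathbb{C})$ has multiplicity one on each summand, observe that because the target has a single summand this determines the full Bratteli data, and conclude via Fact~\ref{fact:Embeddings} that any two such embeddings are conjugate by a unitary of $M_n(\mathbb{C})$, so Theorem~\ref{thm:MainQETest} applies. Your additional remarks---verifying that $\op{Ad}(v)$ preserves $\rho_{\min}$, explaining why a single target summand makes the row sums determine the multiplicity matrix, and the dimension count forcing $\sum_j n_j = n$---are all correct elaborations that the paper leaves implicit.
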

\begin{proof}
Let $C$ be a sub-$C^*$-algebra of $M_n(\mathbb{C})$ and let $\iota : C \into M_n(\mathbb{C})$ be an $L_{\min}$-embedding. $C$ is a finite dimensional $C^*$-algebra, so there exist natural numbers $n_1, \ldots,n_k$ such that\[
n_1 + \cdots + n_k \leq n,
\] 
as well as an isomorphism \[
\phi:C \cong M_{n_1}(\mathbb{C})\oplus\cdots\oplus M_{n_k}(\mathbb{C}). 
\]
Identifying $C$ with $\phi(C)$, by Lemma \ref{lem:Min:Restriction} the Bratteli diagram corresponding to $\iota$ is of the form\[
\begin{tikzcd}
 & M_n(\mathbb C) &  \\
M_{n_1}(\mathbb C) \arrow[r, "\oplus" description, no head, dotted] \arrow[ru, no head] & \cdots \arrow[r, "\oplus" description, no head, dotted] & M_{n_k}(\mathbb C) \arrow[lu, no head]
\end{tikzcd}
\]
Since this form is independent of $\iota$, it follows from fact \ref{fact:Embeddings} that any two $L_{\min}$-embeddings into $M_n(\mathbb{C})$ are unitarily conjugate in $M_n(\mathbb{C})$, and hence are amalgamated by an inner automorphism of $M_n(\mathbb{C})$.
\end{proof}

One might hope that Theorem \ref{thm:Min:MatrixAlgebras} holds for arbitrary finite dimensional C*-algebras, not just those of the form $M_n(\mathbb{C})$.  The next example shows that this is not the case.

\begin{ex}\label{min:issues}
Both of the embeddings depicted in the following diagram are $L_{\min}$ embeddings.
\begin{equation*}
\begin{tikzcd}[ampersand replacement=\&, column sep = small]
M_3(\mathbb C) \arrow[r, "\oplus" description, no head, dotted] \& M_2(\mathbb C) \& M_3(\mathbb C) \arrow[r, "\oplus" description, no head, dotted] \& M_2(\mathbb C) \\
 \&  \&  \&  \\
\mathbb C \arrow[r, "\oplus" description, no head, dotted] \arrow[rruu, no head, dashed] \arrow[uu, no head, bend left] \& \mathbb C \arrow[r, "\oplus" description, no head, dotted] \arrow[ruu, no head, dashed, bend right] \arrow[uu, no head] \& \mathbb C \arrow[r, "\oplus" description, no head, dotted] \arrow[uu, no head, dashed] \arrow[luu, no head, bend left] \& M_2(\mathbb C) \arrow[uu, no head, dashed, bend right] \arrow[llluu, no head]
\end{tikzcd}
\end{equation*}
These embeddings cannot be amalgamated, so by Theorem \ref{thm:MainQETest} the $L_{\min}$ theory of $\mathbb{C}^3 \oplus M_2(\mathbb{C})$ does not have quantifier elimination.
\end{ex}

In the following section we will see that the embeddings in this example can be prohibited by forcing our embeddings to preserve unitary conjugacy.

To conclude this section we make some remarks about the trace on a full matrix algebra.  Let $L_{\op{trace}}$ be the language of unital C*-algebras, together with a new predicate symbol (intended to represent a trace on the algebra).  It was already noticed in \cite{Eagle2017} that full matrix algebras have quantifier elimination in $L_{\op{trace}}$, when the new symbol is interpreted as the unique trace on the matrix algebra.  This observation about the trace and our Theorem \ref{thm:Min:MatrixAlgebras} are essentially equivalent.  The reason is that all matrix algebras have real rank $0$, and thus the trace is completely determined by its behaviour on projections.  Moreover, for any projection $p \in M_n(\mathbb{C})$, we have $\op{tr}(p) = \op{rank}(p)/n$.  Thus the minimal projections in $M_n(\mathbb{C})$ are exactly the projections with trace $\frac{1}{n}$.  Conversely, in $M_n(\mathbb{C})$ every projection $p$ is a sum of minimal projections, and the rank of $p$ is the minimal number of minimal projections needed to obtain $p$.  Thus, in the case of full matrix algebras, the trace and $\rho_{\min}$ are quantifier-free interdefinable.

Finite dimensional C*-algebras other than full matrix algebras do not have a unique trace.  Therefore if $A$ is a finite dimensional C*-algebra that is not a full matrix algebra, then there is not a canonical way to view $A$ as an $L_{\op{trace}}$ structure, while there is a canonical way to view it as an $L_{\min}$ structure.  It is for this reason that we have chosen a treatment based on minimal projections rather than traces.

\section{Unitarily conjugate pairs}\label{sec:Unitary}
Having seen that the distance to minimal projections does not suffice to obtain quantifier elimination in all finite dimensional C*-algebras, we introduce another new predicate.

\begin{defn}\label{defn:RhoSim}
Let $A$ be a unital C*-algebra.  We denote by $\rho_\sim^A$ the predicate measuring the distance (in $A^2$) to the set of unitarily conjugate pairs in $A^2$.  That is, if we define
\[S = \{(a, b) \in A : u^*au = b \text{ for some unitary } u \in A\},\]
then for any $x, y \in A$,
\[\rho_\sim(x, y) = \inf\{\max\{\norm{x-a}, \norm{y-b}\} : (a, b) \in S\}.\]
\end{defn}

\begin{thm}\label{thm:RhoSimDfbl}
The predicate $\rho_\sim$ is $L_{C^*}$-definable, uniformly in all C*-algebras.
\end{thm}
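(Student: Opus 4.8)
The plan is to use the same general principle that was used for $\rho_{\op{min}}$ in Theorem \ref{thm:RhoMinDfbl}, namely a criterion from \cite{Farah} (the weak stability / definability-by-elimination-of-quantifiers machinery for distance predicates) which says that the distance to a set defined by a system of equations is $L_{C^*}$-definable provided that set is cut out uniformly, i.e.\ provided it is "elementary" in the appropriate relational sense and, crucially, provided one has a \emph{uniform} way of approximately solving the defining relations. So the first step is to rewrite membership in $S$ purely in terms of a witnessing unitary: $(a,b) \in S$ if and only if there exists $u$ with $u^*u = uu^* = 1$ and $u^*au = b$. The set of such triples $(a,b,u)$ is the zero-set of the quantifier-free $L_{C^*}$-formula $\max\{\norm{u^*u - 1}, \norm{uu^*-1}, \norm{u^*au - b}\}$, and $\rho_\sim(x,y)$ is the infimum over $u$ (of norm $1$) of $\max\{\norm{x - u^*(\,\cdot\,)}, \ldots\}$ — but that is exactly an $\inf$-formula, i.e.\ a formula \emph{with} quantifiers, so more work is needed to see it is in fact definable without appealing to definability of the \emph{set} $S$ as a zero set.

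The cleaner route, and the one I would actually carry out, is to show directly that $S$ is a definable set in the sense of \cite{Farah} by verifying the two hypotheses of the relevant lemma there (the analogue of \cite[Lemma 3.13.4]{Farah} used in Theorem \ref{thm:RhoMinDfbl}): (i) the function $(a,b) \mapsto \rho_\sim(a,b)$ is \emph{lower semi-continuous and definable-by-an-$\inf$}, which is immediate from the definition, and (ii) there is a \emph{uniform modulus} controlling how well one can approximate a pair $(a,b)$ by an element of $S$ once $\rho_\sim(a,b)$ is small — equivalently, $S$ satisfies a uniform "approximate membership implies near membership" condition. Concretely, one wants: for every $\epsilon > 0$ there is $\delta > 0$ such that in \emph{every} unital C*-algebra $A$, if $\norm{u^*u-1}, \norm{uu^*-1}, \norm{u^*au - b} < \delta$ for some contraction $u$ and some contractions $a,b$, then $(a,b)$ is within $\epsilon$ (in the $\max$ metric) of a genuine unitarily conjugate pair. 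This is where the one real input is needed: a contraction that is within $\delta$ of satisfying $u^*u = uu^* = 1$ is within $O(\delta)$ of an actual unitary $v$ (this is a standard functional-calculus / polar-decomposition stability fact for unitaries, uniform over all C*-algebras), and then $(v^*av, b')$ with $b' = v^*av$ is an element of $S$ within $O(\delta)$ of $(a,b)$. Choosing $\delta$ appropriately in terms of $\epsilon$ gives the uniform modulus.

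The main obstacle — really the only nontrivial point — is establishing this uniform stability of unitaries: that near-unitaries are near actual unitaries with a modulus that does not depend on the ambient algebra. I would handle it by the usual argument: if $\norm{u^*u - 1} < \delta$ then $u^*u$ is invertible with $\op{sp}(u^*u) \subseteq (1-\delta, 1+\delta)$, so $v = u(u^*u)^{-1/2}$ is an isometry with $\norm{u - v} \le$ some explicit function of $\delta$ tending to $0$; the same bound on $\norm{uu^*-1}$ forces $v$ to be a co-isometry as well, hence unitary. Because the spectral estimates are algebra-independent, the modulus is uniform, which is exactly what condition (ii) of the definability criterion requires. Feeding this into \cite[Lemma 3.13.4]{Farah} (or its stated generalization) then yields that $\rho_\sim$ is $L_{C^*}$-definable uniformly over all C*-algebras, completing the proof. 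I do not expect to need finite-dimensionality anywhere here, which is consistent with the theorem being stated for all C*-algebras rather than only finite-dimensional ones.
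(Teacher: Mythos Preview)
Your proposal is correct and follows the same overall architecture as the paper's proof---show that $S$ is a definable set by verifying a weak-stability condition and then invoke the appropriate lemma from \cite{Farah}---but you work harder than necessary at one point, and this is worth noting.

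The paper first cites \cite[Example 3.2.7]{Farah} to the effect that the set $U(A)$ of unitaries is itself a definable set, so that the expression $\psi(x,y) = \inf_{u \in U(A)} \norm{u^*xu - y}$ is already a definable predicate in which the infimum ranges over \emph{genuine} unitaries. With this in hand the weak-stability check is a one-liner: if $\psi(x,y) < \epsilon$ then some actual unitary $u$ satisfies $\norm{u^*xu - y} < \epsilon$, and one simply takes $(a,b) = (x, u^*xu)$. No perturbation of $u$ is needed. Your route, by contrast, quantifies over arbitrary contractions $u$ and then has to argue that an approximate unitary is close to a true unitary via the polar-decomposition estimate $v = u(u^*u)^{-1/2}$. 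That argument is fine and uniform, but it is precisely the content of the statement that $U(A)$ is definable---so you are effectively re-proving the cited fact rather than invoking it. The paper's organization buys a cleaner proof; yours is more self-contained. Both reach the same conclusion and both correctly avoid any finite-dimensionality hypothesis.
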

\begin{proof}
For any C*-algebra $A$, let $U(A)$ denote the set of unitaries in $A$.  It is shown in \cite[Example 3.2.7]{Farah} that $U(A)$ is a definable set.  It follows (see \cite[Definition 3.2.3]{Farah}) that the expresssion $\psi(x, y) = \inf_{u \in U(A)} \norm{u^*xu - y}$ is a definable predicate.  

It is immediate from the definitions that $\rho_\sim^A$ is exactly the distance to the zero set of $\psi^A$.  By \cite[Lemma 3.2.5]{Farah}, to complete our proof it is sufficient to show that $\psi$ is \emph{weakly stable}, that is, we must show that for every $\epsilon > 0$ there exists $\delta > 0$ such that in every C*-algebra $A$, if $(x, y) \in A^2$ satisfies $\psi^A(x, y) < \delta$ then there exists $(a, b) \in A^2$ such that $\max\{\norm{x-a}, \norm{y-b}\} < \epsilon$ and $\psi^A(a, b) = 0$.

Given $\epsilon > 0$, pick $\delta = \epsilon$.  If $\psi^A(x, y) < \epsilon$, then $\inf_{u \in U(A)} \norm{u^*xu - y} < \epsilon$, and hence there is some $u \in U(A)$ such that $\norm{u^*xu - y} < \epsilon$.  Let $a = x$ and $b = u^*xu$.  Then $\psi^A(a, b) = 0$, and $\max\{\norm{x-a}, \norm{y-b}\} = \max\{0, \norm{y-u^*xu}\} < \epsilon$.
\end{proof}

Let $L_* = L_{C^*} \cup \{P_{\min}, P_{\sim}\}$, where $P_{\min}$ is as defined in Section \ref{sec:Min}, and $P_{\sim}$ is a new binary predicate symbol.

\begin{lem}\label{lem:Sim:Restriction}
Let $A = \bigoplus_{j=1}^k M_{n_j}(\mathbb{C})$ and $B = \bigoplus_{j=1}^l M_{m_j}(\mathbb{C})$ be finite dimensional C*-algebras, and let $\phi : A \to B$ be an $L_*$-embedding.  Then $\op{mult}_\phi(M_{m_j}(\mathbb{C})) = 1$ for all $j$.
\end{lem}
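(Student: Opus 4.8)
The plan is to mirror the strategy used in Lemma \ref{lem:Min:Restriction}, but now exploiting the extra structure provided by $P_\sim$, which forces $\phi$ to preserve unitary conjugacy in \emph{both directions}. The key observation is that being a minimal projection is an $L_{\min}$-property (hence preserved by an $L_*$-embedding), but the stronger statement we need is that for two minimal projections $p, q$ in $B$, $p \sim_u q$ in $B$ if and only if $p$ and $q$ lie in the same direct summand $M_{m_j}(\mathbb{C})$. So two minimal projections of $B$ are unitarily conjugate exactly when they ``belong'' to the same block, and this equivalence is detected by $P_\sim^B$. An $L_*$-embedding must therefore respect this block-membership equivalence on minimal projections.

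The steps, in order: First, fix $j$ and let $f_j \in B$ be the minimal projection of equation \eqref{f thing}, sitting in the summand $M_{m_j}(\mathbb{C})$. Since $\phi$ is unital and $\sum_i e_i \oplus (\text{other summand generators}) = 1_A$ maps to $1_B = \sum_j f_j + \cdots$, a standard argument shows $f_j$ decomposes along the image of $\phi$. More precisely, let $p \in A$ be any minimal projection of $A$ (so $p \sim_u e_i$ in $A$ for the summand $M_{n_i}(\mathbb{C})$ containing it); then $\phi(p)$ is a projection in $B$ of rank $\mathrm{mult}_\phi(M_{n_i}(\mathbb{C}))$. Second, I would argue that if $\mathrm{mult}_\phi(M_{m_j}(\mathbb{C})) > 1$ then there exist two minimal projections $q_1, q_2 \in A$, lying in \emph{different} summands of $A$, whose images $\phi(q_1), \phi(q_2)$ both have nonzero ``$j$-component'' — i.e.\ both meet the summand $M_{m_j}(\mathbb{C})$ of $B$. (This is where the multiplicity hypothesis is used: two distinct edges into $M_{m_j}$ in the Bratteli diagram come from summands of $A$ — possibly the same one, possibly different; one must rule out, or separately handle, the case where both edges emanate from the same summand.) Third, produce from $q_1, q_2$ a \emph{single pair} of minimal projections of $B$ that are unitarily conjugate in $B$ but arise as ``pieces'' of $\phi(q_1)$ and $\phi(q_2)$ — and then pull back via $P_\sim$-preservation to contradict the fact that $q_1, q_2$ lie in different summands of $A$ (hence generate non-unitarily-conjugate minimal projections there).

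The main obstacle is the direction of the implication and the bookkeeping around it. Preservation of $P_\sim$ gives $\phi(p) \sim_u \phi(q) \Rightarrow p \sim_u q$ only if we can witness the unitary conjugacy of the images \emph{inside the image subalgebra} $\phi(A)$, whereas the natural unitaries live in all of $B$; one must check that $P_\sim^A(q_1,q_2) = P_\sim^B(\phi(q_1), \phi(q_2))$ really does transfer the relevant information, keeping in mind $P_\sim$ measures distance to the conjugacy locus rather than conjugacy itself. The cleanest route is probably: decompose $\phi(q_1) = r_1 + r_1'$ and $\phi(q_2) = r_2 + r_2'$ into their $M_{m_j}$-parts and remainders, note $r_1, r_2$ are minimal projections of $B$ lying in the same block $M_{m_j}(\mathbb{C})$ so $r_1 \sim_u r_2$ in $B$, and then leverage minimality together with $P_\sim$-preservation to force $q_1 \sim_u q_2$ in $A$ — contradicting that $q_1,q_2$ sit in different summands. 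The subcase where both Bratteli-diagram edges into $M_{m_j}$ come from the \emph{same} summand $M_{n_i}(\mathbb{C})$ of $A$ needs a separate (easier) argument: there $\mathrm{mult}_\phi(M_{n_i}) \geq 2$, and one can directly compare $\phi$ of a minimal projection against $f_j$ to get a rank contradiction, much as in Lemma \ref{lem:Min:Restriction}. Combining the two subcases yields $\mathrm{mult}_\phi(M_{m_j}(\mathbb{C})) = 1$ for every $j$.
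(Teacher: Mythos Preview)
Your approach is correct and essentially matches the paper's proof. The paper streamlines it by invoking Lemma~\ref{lem:Min:Restriction} at the very start: since $\mult_\phi(M_{n_i}(\mathbb{C})) = 1$ for every $i$, the ``same summand'' subcase is impossible, and each $\phi(q_i)$ is already a minimal projection of $B$ lying entirely in $M_{m_j}(\mathbb{C})$ (so your $r_i' = 0$), which dissolves your worry about the direction of implication --- one has $\phi(q_1) \sim_u \phi(q_2)$ in $B$ outright, and $P_\sim$-preservation immediately forces $q_1 \sim_u q_2$ in $A$, the desired contradiction.
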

\begin{proof}
Suppose, for a contradiction, that there is some $j$ for which $\op{mult}_\phi(M_{m_j}(\mathbb{C})) > 1$.  By Lemma \ref{lem:Min:Restriction} the multiplicity of each summand of $A$ is $1$, so in order to have $\op{mult}_\phi(M_{m_j}(\mathbb{C})) > 1$ there must be two distinct direct summands $M_{n_\ell}(\mathbb{C})$ and $M_{n_{\ell'}}(\mathbb{C})$ of $A$ such that $\phi$ embeds a copy of each of $M_{n_\ell}(\mathbb{C})$ and $M_{n_{\ell'}}(\mathbb{C})$ into $M_{m_j}(\mathbb{C})$, i.e., $E_\phi(n_\ell, m_j) = 1 = E_\phi(n_{\ell'}, m_j)$.  Since $\op{mult}_\phi(M_{n_\ell}(\mathbb{C}))=1=\op{mult}_\phi(M_{n_{\ell'}}(\mathbb{C}))$, we may restrict the domain and hence assume that $A = M_{n_\ell}(\mathbb{C}) \oplus M_{n_{\ell'}}(\mathbb{C})$ and $B = M_{m_j}(\mathbb{C})$.

Let $p$ be a minimal projection in $M_{n_\ell}(\mathbb{C})$, and let $q$ be a minimal projection in $M_{n_{\ell'}}(\mathbb{C})$.  Since $M_{n_\ell}(\mathbb{C})$ and $M_{n_{\ell'}}(\mathbb{C})$ are distinct direct summands of $A$, we have that $p \oplus 0$ and $0 \oplus q$ are not unitarily conjugate in $A$.  On the other hand, since $\phi$ is an $L_{\min}$-embedding, $\phi(p \oplus 0)$ and $\phi(0 \oplus q)$ are minimal projections in $B = M_{m_j}(\mathbb{C})$,  and hence are unitarily equivalent in $B$.  This contradicts that $\phi$ is an $L_{\sim}$-embedding.
\end{proof}

\begin{thm}\label{thm:LStar}
Let $A$ be a finite dimensional C*-algebra, viewed as an $L_*$ structure by interpreting $P_{\min}$ as $\rho_{\min}^A$ and $P_\sim$ as $\rho_{\sim}^A$.  Then $\op{Th}_{L_*}(A)$ has quantifier elimination.
\end{thm}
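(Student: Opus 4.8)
The plan is to verify condition~(2) of Theorem~\ref{thm:MainQETest}: given an $L_*$-substructure $C$ of a finite dimensional $A$ and two $L_*$-embeddings $\phi,\psi\colon C\to A$, we must produce an $L_*$-automorphism $\theta$ of $A$ with $\theta\circ\phi=\psi$. Two preliminary reductions cut the problem down. First, every $*$-automorphism of $A$ is automatically an $L_*$-automorphism: the set of minimal projections of $A$ and the set $S$ of unitarily conjugate pairs are preserved by any $*$-automorphism, which is moreover isometric, so it preserves $\rho_{\min}^A$ and $\rho_\sim^A$; hence it is enough to conjugate $\phi$ to $\psi$ by a $*$-automorphism of $A$. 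Second, the inclusion $\iota\colon C\hookrightarrow A$ is itself an $L_*$-embedding, because the interpretations of $P_{\min}$ and $P_\sim$ on the substructure $C$ are by definition the restrictions of those on $A$; and by Fact~\ref{fact:Embeddings} two unital embeddings $C\to A$ are conjugate by a $*$-automorphism of $A$ exactly when their Bratteli diagrams agree up to a permutation of the summands of $A$ within isomorphism classes (these permutations realizing the non-inner automorphisms of $A$). Thus the theorem reduces to the claim that \emph{every} $L_*$-embedding $\phi\colon C\to A$ has the same Bratteli diagram as $\iota$, up to such a permutation; granting this, $\phi$ and $\psi$ are each conjugate to $\iota$, hence to one another.

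To prove the claim, write $A=\bigoplus_{j=1}^{l}M_{m_j}(\mathbb{C})$, fix an isomorphism $C\cong\bigoplus_{i=1}^{k}M_{n_i}(\mathbb{C})$, and for a unital embedding $\rho\colon C\to A$ record the multiplicities $E_\rho(i,j)$ and the weights $w^\rho_{ij}:=n_iE_\rho(i,j)$, so that $\sum_i w^\rho_{ij}=m_j$ and $\rho$ sends the central self-adjoint element $\bigoplus_i\lambda_i 1_{n_i}$ to the element whose $j$-th block is diagonal with eigenvalue $\lambda_i$ appearing $w^\rho_{ij}$ times. Since $E_\rho(i,j)=w^\rho_{ij}/n_i$, it suffices to recover $(w^\phi_{ij})$ from $(w^\iota_{ij})$ up to a permutation of columns within the classes $\{\,j:m_j=m_{j'}\,\}$. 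The predicate $\rho_{\min}$ contributes: for a projection $q\in A$ one has $\rho_{\min}^A(q)=0$ iff $q$ has total rank $1$ and $\rho_{\min}^A(q)=1$ otherwise, so preservation of $\rho_{\min}$ on the projections of $C$ forces $\operatorname{mult}_\phi(M_{n_i})=1\iff\operatorname{mult}_\iota(M_{n_i})=1$, while on general self-adjoint elements $\rho_{\min}$ detects more of the fine structure (for instance, whether the top weight in a given summand is at least $2$). The predicate $\rho_\sim$ contributes that, for self-adjoint $x,y\in A$,
\[
\rho_\sim^A(x,y)=\tfrac12\,\max_j\,\max_t\,\bigl|\mu_t(x_j)-\mu_t(y_j)\bigr|,
\]
the halved per-summand bottleneck distance between decreasing eigenvalue lists $\mu_1(z)\ge\mu_2(z)\ge\cdots$ (replace any near-minimizer by its real part, then align eigenvectors). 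Evaluating the identity $\rho_\sim^A(\phi(x),\phi(y))=\rho_\sim^A(\iota(x),\iota(y))$ on pairs of central self-adjoint elements of $C$, with the scalar tuples ranging over permutations of one another and over tuples whose orderings interleave differently in different summands of $A$, and combining those constraints with the ones from $\rho_{\min}$, one concludes that $(w^\phi_{ij})$ is a column permutation of $(w^\iota_{ij})$ within isomorphism classes of summands of $A$; neither predicate alone gives enough, as Examples~\ref{min:issues} and~\ref{unitary:issues} show, and in the special cases where the inherited and canonical $L_*$-structures on $C$ coincide the multiplicity bounds can instead be quoted from Lemmas~\ref{lem:Min:Restriction} and~\ref{lem:Sim:Restriction}.

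The combinatorial heart of the argument --- that preservation of the bottleneck function $\rho_\sim^A$ (together with $\rho_{\min}^A$) on self-adjoint elements of $C$ pins the weight matrix down up to a column permutation --- is where I expect the real difficulty to be. The delicate point is that the bottleneck data restricted to a \emph{single} summand $M_{m_j}$ does not determine that summand's weight vector: on one $M_N$ the weightings $(w_1,\dots,w_k)$ and any rearrangement of them give identical bottleneck distances (for instance $(2,3)$ and $(3,2)$ on $M_5$). What saves the argument is that the permutation of scalars used to build a test pair must be applied \emph{simultaneously} in all summands of $A$, so test pairs whose sorted eigenvalue lists interleave differently across the summands of $A$ do separate the individual columns of the weight matrix. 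Carrying this out while also handling the bookkeeping when $C$ has several summands of the same matrix size and $A$ has isomorphic summands --- this being the source of the allowed column-permutation freedom, and the place where $\rho_{\min}$ is genuinely needed to exclude the bad diagrams of Example~\ref{min:issues} --- is the crux.
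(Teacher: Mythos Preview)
Your proposal is not a complete proof. You correctly reduce to showing that every $L_*$-embedding $\phi:C\to A$ has the same Bratteli diagram as the inclusion up to permuting isomorphic summands of $A$, but the step establishing this --- that preservation of $\rho_{\min}^A$ and $\rho_\sim^A$ forces the weight matrix $(w^\phi_{ij})$ to be such a column permutation of $(w^\iota_{ij})$ --- is precisely what you call ``the combinatorial heart'' and ``where I expect the real difficulty to be.'' Your final paragraph describes the obstacle (single-summand bottleneck data is blind to rearrangements of weights; one must exploit test pairs whose sorted eigenvalue lists interleave differently across summands, while managing repeated summand sizes) without resolving it, and your bottleneck formula for $\rho_\sim^A$ on self-adjoint pairs is asserted without proof. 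As written this is an outline with the crucial lemma missing.

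The paper's argument is far shorter and avoids all of this: it applies Lemma~\ref{lem:Min:Restriction} and Lemma~\ref{lem:Sim:Restriction} directly to $\phi:C\to A$ to conclude that every summand of $C$ and every summand of $A$ has multiplicity exactly $1$ under $\phi$; unitality then forces the Bratteli diagram of $\phi$ to be a perfect matching between equal-size summands, so $\phi$ is an isomorphism, $C\cong A$, and the amalgamating $\theta$ merely reorders the summands of $A$. No weight matrices, no eigenvalue formula, no interleaving combinatorics. You are right to flag the subtlety that those lemmas are stated for the canonical $L_*$-structure on the domain whereas in the QE test $C$ carries the structure inherited from $A$, and these can genuinely differ (e.g.\ for $\mathbb{C}\cdot 1_2\subset M_2(\mathbb{C})$); the paper does not address this. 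But that concern calls for adapting two short lemma proofs, not for replacing them with an elaborate eigenvalue computation that you have not been able to finish.
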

\begin{proof}
We aim to appeal to Theorem \ref{thm:MainQETest}. To this end, find $k \in \mathbb{N}$, and integers $n_1,\cdots,n_k \in \mathbb{N}$, such that $A \cong \bigoplus_{j=1}^k M_{n_j}(\mathbb{C})$. Let $C$ be an $L_*$-substructure of $A$. Since $C$ is a sub-$C^*$-algebra of $A$, $C$ is finite-dimensional. Again, find $l \in \mathbb{N}$ and $m_1,\cdots,m_l \in \mathbb{N}$ such that $C \cong \bigoplus_{j=1}^l M_{m_j}(\mathbb{C})$.  Let $\iota : C \into A$ be the inclusion map, and let $\phi : C \into A$ be another $L_*$-embedding.

Lemma \ref{lem:Min:Restriction} tells us that $\mult_\phi(M_{m_j}(\mathbb{C})) = 1$ for each $j$, and Lemma \ref{lem:Sim:Restriction} tells us that $\mult_\phi(M_{n_j}(\mathbb{C})) = 1$ for each $j$.  Hence, since $\phi$ is unital, if $E_\phi(n_i, m_j) > 0$, then $n_i = m_j$, and $E_\phi(n_i, m_j) = 1$.  Thus $\phi$ is an isomorphism, and hence $A$ and $C$ have the same summands in their direct sum decomposition. Therefore, to amalgamate the diagram
	\begin{equation*}
	\begin{tikzcd}[ampersand replacement = \&]
	A \arrow[r,"\theta"] \& A\\
	C \arrow[u, "\phi"] \arrow[ur, "\subseteq"']
	\end{tikzcd}
	\end{equation*}
we may choose $\theta$ to simply reorder the direct summands of $A$ so as to match the order of those summands in $C$.  By Theorem \ref{thm:MainQETest}, $\op{Th}_{L_*}(A)$ admits quantifier elimination.
\end{proof}

\begin{rem}
The proof of Theorem \ref{thm:LStar} also shows that every $L_*$-embedding of one finite dimensional C*-algebra into another is actually an isomorphism.  This phenomenon is unavoidable, in the following sense.  Suppose that $L$ is any expansion of $L_{C^*}$ such that every new symbol in $L$ represents a predicate that is $L_{C^*}$-definable in all finite dimensional C*-algebras, and that all finite dimensional C*-algebras have quantifier elimination when viewed as $L$-structures.  Since $\rho_{\min}$ and $\rho_{\sim}$ are $L_{C^*}$-definable in every finite dimensional C*-algebra they are also $L$-definable in every finite dimensional C*-algebra, and by quantifier elimination in $L$ they are quantifier-free $L$-definable.  Therefore every $L$-embedding must preserve the values of $\rho_{\min}$ and $\rho_{\sim}$, and so the proof of Theorem \ref{thm:LStar} implies that every $L$-embedding is also an $L$-isomorphism.
\end{rem}

As was the case with $P_{\min}$, adding only $P_{\sim}$ to $L_{C^*}$ is insufficient for obtaining quantifier elimination.

\begin{ex}\label{unitary:issues}
Both of the embeddings in the following diagram are $L_{\sim}$ embeddings.
\[
\begin{tikzcd}[ampersand replacement = \&]
\mathbb C \arrow[r, "\oplus" description, no head, dotted] \& M_2(\mathbb C) \arrow[d, no head, bend right] \& \mathbb C \arrow[r, "\oplus" description, no head, dotted] \& M_2(\mathbb C) \\
 \& \mathbb C \arrow[r, "\oplus" description, no head, dotted] \arrow[u, no head] \arrow[ru, no head, dashed] \& \mathbb C \arrow[llu, no head] \arrow[ru, no head, dashed] \arrow[ru, no head, dashed, bend right] \& 
\end{tikzcd}
\]
The diagram above cannot be amalgamated, as one of the two embeddings sends $1 \oplus 0$ to $1 \oplus 0_2$ and the other sends $1 \oplus 0$ to $1_2 \oplus 0$. Since they differ in rank, $1 \oplus 0_2 \not\sim_u 1_2 \oplus 0$.
\end{ex}

\section{Questions}\label{sec:Questions}
The main results of this paper show that every finite dimensional C*-algebra admits quantifier elimination in $L_*$, and that in any finite dimensional C*-algebra $L_*$-definability coincides with $L_{C^*}$-definability.   Our proof also shows that the definition of $\rho_{\sim}$ is uniform across the class of all C*-algebras.  On the other hand, our proof $\rho_{\min}$ is definable in each finite dimensional C*-algebra does not guarantee that the definition of $\rho_{\min}$ is the same in each algebra.   Since the class of finite dimensional C*-algebras is not an elementary class, the usual techniques for showing that a predicate is uniformly definable in a class of structures do not directly apply.

\begin{q}
Is $\rho_{\min}$ uniformly definable in the class of finite dimensional C*-algebras?
\end{q}

Our proofs depend heavily on Theorem \ref{fin dim amalgamation diagram}, which in turn relies on the fact that if $A$ is finite dimensional, then $A$ is isomorphic to each of its ultrapowers.  If $A$ is infinite dimensional then $A$ has ultrapowers that are not isomorphic to $A$ itself.  In the infinite dimensional context one therefore has to use Theorem \ref{thm:GeneralQETest} directly, and the techniques used in this paper do not apply.  The most optimistic version of the question is the following:

\begin{q}\label{q2}
Is there a finite collection $P_1, \ldots, P_n$ of predicates, each of which is uniformly definable in the class of unital C*-algebras, such that every unital C*-algebra has quantifier elimination in $L_{C^*} \cup \{P_1, \ldots, P_n\}$?
\end{q}

\begin{rem}
It is not difficult to see that $\rho_{\min}$ and $\rho_{\sim}$ alone are not sufficient for obtaining quantifier elimination in the class of all unital C*-algebras.  For instance, consider the algebra $C([0, 1])$ of continuous functions on the unit interval.  In this algebra the only projections are $0$ and $1$, both of which appear as constant symbols in $L_{C^*}$, so $\rho_{\min}^{C([0, 1])}$ is quantifier-free $L_{C^*}$-definable in $C([0, 1])$.  On the other hand, $C([0, 1])$ is commutative, so in this algebra $\rho_\sim$ is just the distance to the diagonal in $C([0, 1])^2$, which is again quantifier-free $L_{C^*}$-definable.  Thus $C([0, 1])$, viewed as an $L_*$-structure, still does not have quantifier elimination.
\end{rem}

One can weaken Question \ref{q2} by only asking that the predicates be definable in each completion of the theory of unital C*-algebras, rather than being uniformly definable.  One might expect to make progress first for UHF algebras, i.e., those infinite dimensional separable C*-algebras which are the direct limits of sequences of full matrix algebras.  Using Theorem \ref{thm:GeneralQETest} still appears difficult, however, because subalgebras of UHF algebras are not always UHF (or even nuclear, see \cite{Blackadar1985}), and ultrapowers of UHF algebras are almost never UHF (see \cite{Carlson2014}).  For this reason, we do not even know how to answer the following special case of Question \ref{q2}:

\begin{q}
Let $A$ be the CAR algebra, that is, the direct limit of the sequence $M_2(\mathbb{C}) \hookrightarrow M_4(\mathbb{C}) \hookrightarrow \cdots \hookrightarrow M_{2^n}(\mathbb{C}) \hookrightarrow \cdots$.
Is there a finite collection $P_1, \ldots, P_n$ of predicates, each of which is definable in the $L_{C^*}$-theory of $A$, such that $A$ has quantifier elimination in $L_{C^*} \cup \{P_1, \ldots, P_n\}$?
\end{q}

\bibliographystyle{amsalpha}
\bibliography{QEBarrier}
\end{document}